\documentclass[12pt,reqno]{amsart}

\usepackage{amssymb}
\usepackage{amsmath}
\usepackage{amsthm}
\usepackage{amsfonts}
\usepackage{amscd}
\usepackage{graphicx}

\usepackage{ifthen}



\usepackage{fullpage}
\usepackage{float}

\usepackage{graphics}
\usepackage{latexsym}
\usepackage{epsf}
\usepackage{breakurl}
\usepackage{indentfirst}
\usepackage{bigints}

\setlength{\textwidth}{6.5in}
\setlength{\oddsidemargin}{.1in}
\setlength{\evensidemargin}{.1in}
\setlength{\topmargin}{-.1in}
\setlength{\textheight}{8.4in}

\usepackage[utf8]{inputenc}
\usepackage[T1]{fontenc}
\usepackage{mathrsfs}
\usepackage{epsfig}
\usepackage{mathtools}
\usepackage{multicol}
\usepackage{enumitem}

\allowdisplaybreaks

\newcommand{\nnend}{\nonumber\\}

\title{On the rate of escape or approach to the origin of a random string}
\author{Phuc Lam}
\address{
Department of Mathematics \\
University of Rochester \\
Rochester, NY, 14627 \\
USA \\}
\email{plam6@u.rochester.edu}
\thanks{Supported by a Discover Grant for Undergraduate Summer Research at the University of Rochester}
\keywords{Rate of escape, stochastic heat equation, recurrence}
\subjclass[2010]{Primary, 60G15; Secondary, .}
\date{}

\begin{document}
\theoremstyle{plain}
\newtheorem{theorem}{Theorem}[section]
\newtheorem{conjecture}[theorem]{Conjecture}
\newtheorem{corollary}[theorem]{Corollary}
\newtheorem{lemma}[theorem]{Lemma}
\newtheorem{proposition}[theorem]{Proposition}

\theoremstyle{definition}
\newtheorem{example}[theorem]{Example}
\newtheorem{definition}[theorem]{Definition}
\newtheorem{notation}[theorem]{Notation}

\theoremstyle{remark}
\newtheorem{remark}[theorem]{Remark}

\numberwithin{equation}{section}
\maketitle

\begin{abstract}
    In this paper, we extend upon a result by Mueller and Tribe regarding Funaki's model of a random string. Specifically, we examine the rate of escape of this model in dimensions $d \ge 7$. We also provide a bound for the rate of approach to the origin in dimension $d = 6$.
\end{abstract}


\section{Introduction and main results}\label{sec:intro}
Mueller and Tribe studied recurrence questions for the following model of a random string in \cite{MT02}:
\begin{equation}\label{eq:FunakiModel}
    \dfrac{\partial u_t(x)}{\partial t} = \dfrac{\partial^2 u_t(x)}{\partial x^2} + \dot{W}(x, t),
\end{equation}
where $\dot{W} = (\dot{W}(x, t))_{t \ge 0, x \in \mathbb{R}}$ is a $\mathbb{R}^d$-valued space-time white noise with independent components and $(u_t(x))_{t \ge 0, x \in \mathbb{R}}$ is a continuous $\mathbb{R}^d$-valued process. We also suppose that the noise is adapted with respect to a filtered probability
space $(\Omega, \mathcal{F}, (\mathcal{F}_t), P)$, where $\mathcal{F}$ is complete and $(\mathcal{F}_t)$ is right continuous, in
that $W(f)$ is $\mathcal{F}_t$-measurable whenever $f$ is supported in $[0, t] \times \mathbb{R}$.

Denote $G_t(x) = (4\pi t)^{-1/2}\exp(-x^2/4t)$ as the fundamental solution of the heat equation. The stationary pinned string $(U_t(x))_{ t \ge 0, x \in \mathbb{R}}$, which we will study in this paper, is a solution to \eqref{eq:FunakiModel} driven by the white noise $\dot{W}(x, t)$ such that
\begin{itemize}
    \item $U_0(x) = \int_0^{\infty}\int (G_r(x-z) - G_r(z)) \tilde{W}(dzdr),$ where $\tilde{W}$ is a space-time white noise independent of $\dot{W}$;
    \item $U_t(x)$ is a continuous version of the process $\int G_t(x - z)U_0(z)dz + \int_0^t\int G_r(x-z)W(dzdr)$.
\end{itemize}

Here, we write $f \lesssim g$ if there is a constant $C > 0$ such that $f(x) \le Cg(x)$ for all $x$, and $f \simeq g$ if there are constants $C_1, C_2 > 0$ such that $C_1f(x) \le g(x) \le C_2f(x)$ for all $x$. We also denote $B_{\delta}(z)$ as the box $\{y \in \mathbb{R}^d: |y_i - z_i| < \delta \ \forall i \}$

Before proceeding any further, we restate a few properties of the stationary pinned string, all of which can be found in \cite{MT02}.
\begin{enumerate}
    \item $U_t(x) = \left(U_t^{(1)}(x), \dots, U_t^{(d)}(x) \right)$, where the $U^{(i)}(x)$ are i.i.d. and $(U_0^{(i)}(x))_{ x \in \mathbb{R}}$ is a two-sided Brownian motion with $U_0(0) = 0$.
    \item Each of the $\left(U_t^{(i)}(x)\right)_{t \ge 0, x \in \mathbb{R}}$ are centered Gaussian fields such that
    \begin{align}
        E&\left[\left(U_t^{(i)}(x) - U_t^{(i)}(y) \right)^2 \right] = |x-y| \ \forall x, y \in \mathbb{R}, t \ge 0, 
        \intertext{and for $x, y \in \mathbb{R}, 0 \le s < t$,} 
        E&\left[\left(U_t^{(i)}(x) - U_s^{(i)}(y) \right)^2 \right] = (t-s)^{1/2}F\left(|x-y|(t-s)^{-1/2} \right), \label{eq:ExpressVariance}
        \intertext{where $F: \mathbb{R} \rightarrow \mathbb{R}$ is smooth, bounded below by $(2\pi)^{-1/2}$, and}
        &\lim_{|x| \rightarrow \infty} F(x)/|x| = 1. \nnend 
        \intertext{Moreover, there exists $c_1 > 0$ such that}
        &c_1\left(|x-y| + |t-s|^{1/2} \right) \le  E\left[\left(U_t^{(i)}(x) - U_s^{(i)}(y) \right)^2 \right] \le 2\left(|x-y| + |t-s|^{1/2} \right). \label{eq:BoundVariance}
    \end{align}
    \item (Translation invariance) For any $t_0 \ge 0, x_0 \in \mathbb{R}$, the field 
    \begin{equation*}
        \left(U_{t_0+t}(x_0 \pm x) - U_{t_0}(x_0) \right)_{x \in \mathbb{R}, t \ge 0}
    \end{equation*}
    has the same law as the stationary pinned string.
    \item (Scaling) For $L > 0$, the field 
    \begin{equation*}
        \left(L^{-1}U_{L^{4}t}(L^2x)\right)_{x \in \mathbb{R}, t \ge 0}
    \end{equation*}
    has the same law as the stationary pinned string.
\end{enumerate}
\eqref{eq:BoundVariance} gives us a useful bound as follows.
\begin{proposition}
\begin{equation*}\label{prop:trivialBound}
    P\left(U_t(x) \in B_{\delta}(0) \right) \lesssim \left(\dfrac{\delta}{\left(t^{1/2} + |x| \right)^{1/2}} \right)^d.
\end{equation*}
\end{proposition}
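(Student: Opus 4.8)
The plan is to exploit the product structure of the coordinates to reduce the claim to a one-dimensional Gaussian estimate. Since $U_t(x) = (U_t^{(1)}(x),\dots,U_t^{(d)}(x))$ has i.i.d. components by property (1), and the box $B_{\delta}(0) = \{y \in \mathbb{R}^d : |y_i| < \delta\ \forall i\}$ is a product of intervals in the same coordinates, the event $\{U_t(x) \in B_{\delta}(0)\}$ factorizes:
\begin{equation*}
    P\left(U_t(x) \in B_{\delta}(0)\right) = \prod_{i=1}^d P\left(|U_t^{(i)}(x)| < \delta\right) = P\left(|U_t^{(1)}(x)| < \delta\right)^d .
\end{equation*}
So it suffices to show $P(|U_t^{(1)}(x)| < \delta) \lesssim \delta\left(t^{1/2} + |x|\right)^{-1/2}$, after which raising to the $d$th power gives the result.

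Next I would identify the law of the single coordinate. By property (1) we have $U_0^{(1)}(0) = 0$, and by property (2) each field $U^{(1)}$ is centered Gaussian, so $U_t^{(1)}(x) = U_t^{(1)}(x) - U_0^{(1)}(0)$ is a centered Gaussian random variable with some variance $\sigma^2$. Applying \eqref{eq:BoundVariance} with $s = 0$ and $y = 0$ gives the two-sided bound $c_1(|x| + t^{1/2}) \le \sigma^2 \le 2(|x| + t^{1/2})$; in particular $\sigma \gtrsim (t^{1/2} + |x|)^{1/2}$ whenever $(t,x) \neq (0,0)$ (the degenerate case is trivial since the right-hand side of the proposition is then infinite). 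Now the elementary bound that a centered Gaussian density is at most $(2\pi\sigma^2)^{-1/2}$ yields
\begin{equation*}
    P\left(|U_t^{(1)}(x)| < \delta\right) \le \frac{2\delta}{\sqrt{2\pi}\,\sigma} \lesssim \frac{\delta}{\left(t^{1/2} + |x|\right)^{1/2}},
\end{equation*}
which completes the argument.

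There is no genuine obstacle here: the proposition is essentially a repackaging of \eqref{eq:BoundVariance} together with the independence of the coordinates. The only point worth flagging is that one really uses the \emph{lower} bound on the variance in \eqref{eq:BoundVariance} (not merely that the variance is finite), since it is precisely $\sigma^2 \gtrsim t^{1/2} + |x|$ that produces the stated power of $(t^{1/2}+|x|)$; the upper bound on $\sigma^2$ is not needed for this direction.
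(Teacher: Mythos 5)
Your argument is correct and matches the paper's proof step for step: factorize via the i.i.d.\ coordinates, invoke the lower variance bound from \eqref{eq:BoundVariance} with $s=0$, $y=0$, and then apply the sup bound $(2\pi\sigma^2)^{-1/2}$ on the Gaussian density over the interval $(-\delta,\delta)$. The remark flagging the degenerate case $(t,x)=(0,0)$ and noting that only the lower variance bound is used is a nice bit of bookkeeping the paper leaves implicit, but the substance is identical.
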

\begin{proof}
\begin{align*}
    P\left(U_t(x) \in B_{\delta}(0) \right) &= P\left(U_t^{(i)}(x) \in B_{\delta}(0) \right)^d \\
    &= \left(\dfrac{1}{\sqrt{2\pi \text{Var}U_t^{(i)}(x)}}\int_{-\delta}^{\delta} \exp\left(-\frac{z^2}{2\text{Var}U_t^{(i)}(x)} \right) dz \right)^{d} \\
    &\le \left(\dfrac{2\delta}{\sqrt{2\pi c_1(t^{1/2} + |x|)}} \right)^{d}.
\end{align*}
\end{proof}

We call $(U_t(x))_{t \ge 0, x \in \mathbb{R}}$
\begin{itemize}
    \item {\bf point recurrent} if, almost surely, $\forall z \in \mathbb{R}^d$, there exist (random) sequences $\{x_n\}, \{t_n\}$ such that $t_n \nearrow \infty$ and $U_{t_n}(x_n) = z \ \forall n$;
    \item {\bf neighborhood recurrent} if, almost surely, $\forall z \in \mathbb{R}^d$ and $\epsilon > 0$, there exist (random) sequences $\{x_n\}, \{t_n\}$ such that $t_n \nearrow \infty$ and $U_{t_n}(x_n) \in B_{\epsilon}(z) \ \forall n$; 
    \item {\bf transient} if, $\inf_{x \in \mathbb{R}}|U_t(x)|$ goes to infinity.
\end{itemize}
In \cite{MT02}, Mueller and Tribe showed that 
\begin{itemize}
    \item for $d \le 5$, $(U_t(x))_{t \ge 0, x \in \mathbb{R}}$ is point recurrent;
    \item for $d = 6$, $(U_t(x))_{t \ge 0, x \in \mathbb{R}}$ is neighborhood recurrent but not point recurrent;
    \item for $d \ge 7$, $(U_t(x))_{t \ge 0, x \in \mathbb{R}}$ is transient.
\end{itemize}

This motivates the question of finding the rate of escape of this string when $d \ge 6$. How fast should a ball centered at $z$ grow so that (neighborhood) recurrence happens for $d \ge 7$? Likewise, how fast should a ball centered at $z$ shrink so that transience happens for $d = 6$?

In this paper, we wish to study the question of recurrence and transience when the growth rate is of the form $f(t) = t^{\alpha}$, where $\alpha \in \mathbb{R}$. Since it suffices to consider the ball centered at the origin, we establish our main results in the following theorems.

\begin{theorem}\label{thm:FirstBigTheorem}
Suppose $(U_t(x))_{t \ge 0, x \in \mathbb{R}}$ is the stationary pinned string in $\mathbb{R}^d \ (d \ge 7)$. Then almost surely,
\begin{equation*}
    \liminf_{t \rightarrow \infty} \dfrac{\inf_{x \in \mathbb{R}}|U_t(x)|}{t^{\alpha}} = 
    \begin{cases} +\infty & (0 < \alpha < 1/4) \\
    0 & (\alpha \ge 1/4).
    \end{cases}
\end{equation*}
This means neighborhood recurrence of $U_t(x)$ happens when $\alpha \ge 1/4$, and transience happens otherwise.
\end{theorem}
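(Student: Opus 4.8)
The plan is to combine a dyadic decomposition of the time axis with the scaling property. Since $t^{\alpha}\ge t^{1/4}$ for $t\ge 1$ when $\alpha\ge 1/4$, one has $\inf_{x}|U_{t}(x)|/t^{\alpha}\le\inf_{x}|U_{t}(x)|/t^{1/4}$, so the whole recurrent regime $\alpha\ge 1/4$ reduces to the single exponent $\alpha=1/4$; thus there are two things to prove: (i) $\liminf_{t}\inf_{x}|U_{t}(x)|/t^{\alpha}=+\infty$ a.s.\ for $0<\alpha<1/4$, and (ii) $\liminf_{t}\inf_{x}|U_{t}(x)|/t^{1/4}=0$ a.s. Part (i) I would carry out with a first--moment estimate and the Borel--Cantelli lemma; part (ii) with a second--moment (decoupling) argument.

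For (i), fix $K>0$, set $\delta_{k}=K2^{(k+1)\alpha}$, and let $A_{k}$ be the event that $|U_{t}(x)|<\delta_{k}$ for some $t\in[2^{k},2^{k+1}]$ and $x\in\mathbb{R}$. If $A_{k}$ fails for all large $k$ then $\inf_{x}|U_{t}(x)|\ge Kt^{\alpha}$ for all large $t$, so (letting $K\to\infty$) it suffices to show $\sum_{k}P(A_{k})<\infty$. I would cover the relevant part of $(t,x)$--space by boxes of spatial side $\simeq h_{k}^{2}$ and temporal side $\simeq h_{k}^{4}$ --- so of diameter $\simeq h_{k}$ in the canonical metric $\rho((t,x),(s,y))^{2}=|x-y|+|t-s|^{1/2}$ --- with $h_{k}=\delta_{k}2^{-\eta k}$, $\eta>0$ small. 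On such a box, if $|U|<\delta_{k}$ somewhere then either the box--centre $(t_{0},x_{0})$ has $|U_{t_{0}}(x_{0})|<2\delta_{k}$, or the oscillation of $U$ on the box exceeds $\delta_{k}$; the first probability is controlled by the Gaussian estimate $P(U_{t}(x)\in B_{r}(0))\lesssim(r/(t^{1/2}+|x|)^{1/2})^{d}$ proved above, and the second, via \eqref{eq:BoundVariance} and the Borell--TIS inequality, by $\exp(-c\delta_{k}^{2}/h_{k}^{2})=\exp(-c2^{2\eta k})$, which absorbs the number of boxes. Summing the first probabilities over the boxes in the bulk $|x|\lesssim 2^{k/2}$ and over the $O(k)$ dyadic shells $2^{m}2^{k/2}\le|x|\lesssim 2^{k}$ gives, after the geometric sums in $m$, a contribution of order $K^{d-6}2^{k(3/2-d/4+\alpha(d-6)+6\eta)}$; the range $|x|>2^{k}$ is treated separately, using that $U_{0}(x)$ is a two--sided Brownian motion (so $|U_{0}(x)|\simeq|x|^{1/2}$) while $U_{\cdot}(x)$ moves by only $O(2^{k/4})$ over $[0,2^{k+1}]$, which yields a term of order $2^{k(1/2-d/4)}(\log k)^{O(1)}$. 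For $\eta$ small both exponents are negative, since $3/2-d/4+\alpha(d-6)=(d-6)(\alpha-1/4)<0$ (using $d>6$) and $1/2-d/4<0$; hence $\sum_{k}P(A_{k})<\infty$.

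For (ii), fix $\epsilon>0$ and let $A_{k}^{\epsilon}$ be the event that $|U_{t}(x)|<\epsilon 2^{k/4}$ for some $t\in[2^{k},2^{k+1}]$, $x\in\mathbb{R}$; on this event $\inf_{x}|U_{t}(x)|<\epsilon t^{1/4}$ for that $t$, so it is enough to show $\{A_{k}^{\epsilon}\text{ i.o.}\}$ has probability $1$ and then intersect over $\epsilon=1/m$. By the scaling property (with $L=2^{k/4}$) we get $P(A_{k}^{\epsilon})=P(\exists\,s\in[1,2],\,y:\ |U_{s}(y)|<\epsilon)=:p(\epsilon)$, independent of $k$ and strictly positive since $p(\epsilon)\ge P(|U_{1}(0)|<\epsilon)>0$ ($U_{1}(0)$ being a nondegenerate Gaussian vector with a density positive at the origin); hence $\sum_{k}P(A_{k}^{\epsilon})=\infty$. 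To pass to ``i.o.'' I would invoke the Kochen--Stone form of the second Borel--Cantelli lemma, for which it suffices that $\sum_{j<k\le N}\operatorname{Cov}(\mathbf{1}_{A_{j}^{\epsilon}},\mathbf{1}_{A_{k}^{\epsilon}})=o(N^{2})$. Since \eqref{eq:FunakiModel} is first order in $t$, $(U_{t}(\cdot))_{t\ge 0}$ is a Markov process, so $A_{k}^{\epsilon}$ is conditionally independent of $\mathcal{F}_{2^{j+1}}\supseteq\sigma(A_{j}^{\epsilon})$ given $U_{2^{k}}(\cdot)$; writing $U_{2^{k}}=\int G_{2^{k}-2^{j+1}}(\,\cdot\,-z)\,U_{2^{j+1}}(z)\,dz+\Xi$, the term $\Xi$ is built from the noise on $[2^{j+1},2^{k}]$, independent of $\mathcal{F}_{2^{j+1}}$ and of the full scale $2^{k/4}$, whereas the only part of the first term correlated with $A_{j}^{\epsilon}$ is the restriction of $U_{2^{j+1}}$ to $|z|\lesssim 2^{j/2}$, which enters the average over the region $|x|\lesssim 2^{k/2}$ relevant to $A_{k}^{\epsilon}$ with relative weight $O(2^{\frac{3}{4}(j-k)})$. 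This should give a covariance decaying geometrically in $k-j$, so $\sum_{j<k\le N}\operatorname{Cov}=O(N)=o(N^{2})$, completing the argument.

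The step I expect to be the main obstacle is the covariance bound in (ii): turning ``the influence of the remote past on the part of $U_{2^{k}}$ near the origin is geometrically small'' into a rigorous estimate requires quantitative control of how the heat semigroup mixes the string (a locality statement for a kernel of infinite propagation speed, rescued by the Gaussian tails of $G$), together with a lower bound for $P(A_{k}^{\epsilon}\mid U_{2^{k}}(\cdot))$ valid on a set of configurations of non--vanishing probability; alternatively one may adapt the second--moment estimates of Mueller and Tribe for neighborhood recurrence in $d=6$ to the shrinking ball $B_{\epsilon t^{1/4}}(0)$. A secondary technical point in (i) is the uniform-in-$k$ control of the modulus of continuity and of the far range $|x|>2^{k}$.
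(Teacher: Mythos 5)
Your broad strategy for the transient regime $0<\alpha<1/4$ is the same as the paper's: cover the relevant region of the $(t,x)$-plane by small boxes, bound the probability of the string entering the shrinking ball at the grid points via Proposition~\ref{prop:trivialBound}, control the oscillation on each box via the sub-Gaussian tail bound \eqref{eq:boundExp}, and apply Borel--Cantelli. Your bulk exponent $(d-6)(\alpha-1/4)+6\eta<0$ is correct and is essentially the same power count that drives the paper's construction. The gap is in the tail in $x$. You use boxes of a single size $h_k=\delta_k 2^{-\eta k}$ across $|x|\lesssim 2^k$; since the Borell--TIS bound $\exp(-c\,2^{2\eta k})$ is the \emph{same} for every box, pushing uniform boxes out to all of $\mathbb{R}$ makes the oscillation sum diverge, forcing you into a separate treatment of $|x|>2^k$. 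The argument you sketch there (``$|U_0(x)|\simeq|x|^{1/2}$'' plus ``$U_\cdot(x)$ moves by $O(2^{k/4})$'') is not a bound you can use as stated --- the two-sided Brownian motion $U_0$ has arbitrarily large excursions toward $0$, and the increment $U_t(x)-U_0(x)$ must be controlled uniformly in $x$ --- and the claimed order $2^{k(1/2-d/4)}(\log k)^{O(1)}$ is not derived. The paper avoids the issue by letting the subdivision count $m(n,k)=\lfloor(n^{1/2}+|k|)^{d/6-1.1}\rfloor$ grow with $|k|$: the grid-point sum then collapses to $\sum_{n,k}(n^{1/2}+k)^{-3.3}$ and the oscillation sum to $\sum_{n,k}m(n,k)^3\exp(-c\delta^2 m(n,k))$, both of which converge over all of $\mathbb{Z}$ in $k$ with no separate far-range case.

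For the recurrent regime $\alpha\ge 1/4$ your plan is considerably heavier than necessary, and the step you flag as the main obstacle is in fact avoidable. You correctly reduce to $\alpha=1/4$, correctly use scaling to get $P(A_k^\epsilon)=p(\epsilon)>0$ independent of $k$, and then reach for Kochen--Stone, which requires $\sum_{j<k\le N}\operatorname{Cov}(\mathbf{1}_{A_j^\epsilon},\mathbf{1}_{A_k^\epsilon})=o(N^2)$. Your Markov-property heuristic for that covariance decay (heat-kernel mixing plus a conditional lower bound for $P(A_k^\epsilon\mid U_{2^k})$) is plausible but is a genuine piece of unfinished work. The paper shows none of it is needed: after the same scaling observation, reverse Fatou ($P(\limsup_n A_n)\ge\limsup_n P(A_n)$) already yields $P(E_\epsilon)\ge P\big(|U_1(0)|<\epsilon\big)>0$, and $E_\epsilon$ lies in the tail $\sigma$-field $\mathcal{G}$ (Lemma~\ref{lem:tailevent}), so the zero--one law upgrades a positive probability to $1$ with no second-moment estimate at all. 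Kochen--Stone would also deliver probability $1$ if the covariance bound were proved, so your route is not wrong in principle, but identifying the recurrence event as a tail event is exactly what lets the paper bypass the hard step, and it is the structural ingredient missing from your proposal.
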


\begin{theorem}\label{thm:SecBigTheorem}
Suppose $(U_t(x))_{t \ge 0, x \in \mathbb{R}}$ is the stationary pinned string in $\mathbb{R}^6$. Then almost surely,
\begin{equation*}
     \liminf_{t \rightarrow \infty} \dfrac{\inf_{x \in \mathbb{R}}|U_t(x)|}{t^{-\alpha}} = 0 \ \forall \alpha > 0.
\end{equation*}
This means neighborhood recurrence of $U_t(x)$ happens for all $\alpha > 0$.
\end{theorem}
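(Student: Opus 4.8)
The plan is to reduce the statement to a Borel--Cantelli argument run separately at each dyadic time scale, exploiting the extra logarithmic factor that is special to $d=6$. First, since $\alpha\mapsto\liminf_{t\to\infty}t^{\alpha}\inf_{x}|U_t(x)|$ is nondecreasing, it is enough (after a countable intersection over rational $\alpha$) to fix a single $\alpha>0$; moreover it suffices to prove the a priori weaker statement that, almost surely, $\liminf_{t\to\infty}t^{\beta}\inf_{x}|U_t(x)|\le C$ for $\beta:=\alpha+1$ and some absolute constant $C$, because dividing by $t$ along the realizing sequence then forces $\liminf_{t\to\infty}t^{\alpha}\inf_{x}|U_t(x)|=0$. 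So fix $\beta$, put $T_n=2^{n}$, $\delta_n=T_n^{-\beta}$, $R_n=T_n^{1/2}$, and set
\begin{equation*}
 N_n=\int_{T_n}^{2T_n}\!\!\int_{-R_n}^{R_n}\mathbf{1}\{U_t(x)\in B_{\delta_n}(0)\}\,dx\,dt,\qquad A_n=\{N_n>0\}.
\end{equation*}
On $\limsup_n A_n$ one has $t^{\beta}\inf_x|U_t(x)|\le 2^{\beta}$ along a sequence $t\to\infty$, so the goal is $P(\limsup_nA_n)=1$.

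The second step is a second-moment (Paley--Zygmund) estimate for $N_n$. Since $\text{Var}\,U_t^{(i)}(x)\simeq t^{1/2}+|x|\simeq T_n^{1/2}$ on the relevant range, the one-point estimate of the Proposition above (together with its matching lower bound, which also follows from \eqref{eq:BoundVariance} since $\delta_n^2/T_n^{1/2}\to 0$) gives $P(U_t(x)\in B_{\delta_n}(0))\simeq(\delta_n/T_n^{1/4})^{6}$, hence $E[N_n]\simeq\delta_n^{6}$. For the second moment I would use the Gaussian two-point bound
\begin{equation*}
 P\bigl(U_t(x)\in B_\delta(0),\,U_s(y)\in B_\delta(0)\bigr)\ \lesssim\ \Bigl(\tfrac{\delta}{T_n^{1/4}}\Bigr)^{6}\min\Bigl\{1,\ \Bigl(\tfrac{\delta}{(|x-y|+|t-s|^{1/2})^{1/2}}\Bigr)^{6}\Bigr\},
\end{equation*}
which comes from \eqref{eq:BoundVariance} and the identity $2\,\mathrm{Cov}(U^{(i)}_t(x),U^{(i)}_s(y))=\sigma_1^2+\sigma_2^2-\gamma^2$ with $\gamma^2\simeq|x-y|+|t-s|^{1/2}$. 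The decisive point is that in $d=6$ the resulting integral is governed by $\int_{\delta_n^2}^{T_n^{1/2}}\rho^{-1}\,d\rho\simeq\log(1/\delta_n)$, so $E[N_n^2]\lesssim\delta_n^{12}\log(1/\delta_n)$. Paley--Zygmund then yields $P(A_n)\ge(E[N_n])^2/E[N_n^2]\gtrsim 1/\log(1/\delta_n)\simeq 1/n$, and in particular $\sum_nP(A_n)=\infty$.

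The third step upgrades $\sum_nP(A_n)=\infty$ to $P(\limsup_nA_n)=1$; because the string has slowly decaying correlations the $A_n$ are not independent, so I would apply L\'evy's conditional Borel--Cantelli lemma with $\mathcal F_{2T_n}$ (note $A_n$ is $\mathcal F_{2T_n}$-measurable). By the Markov property of $t\mapsto U_t(\cdot)$ in function space, conditionally on $\mathcal F_{2T_{n-1}}$ the string on $[T_n,2T_n]$ equals the deterministic heat-smoothing $\int G_r(\cdot-z)U_{2T_{n-1}}(z)\,dz$ plus an independent solution of \eqref{eq:FunakiModel} driven by the post-$2T_{n-1}$ noise started from zero, whose fluctuations at the relevant scales $r\simeq T_n$ are of order $T_n^{1/4}$. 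On the event $H_n=\{\sup_{|x|\le R_n,\,r\in[T_n/2,T_n]}|\int G_r(x-z)U_{2T_{n-1}}(z)\,dz|\le cT_n^{1/4}\}\in\mathcal F_{2T_{n-1}}$ --- which has probability bounded below uniformly in $n$ by the scaling property, since after rescaling it is literally the same event for every $n$ --- the moment computation above, carried out conditionally, gives $P(A_n\mid\mathcal F_{2T_{n-1}})\gtrsim 1/n$. Because $U_t(0)$ is a fractional-Brownian-type process of index $1/4$, the indicators $\mathbf{1}_{H_n}$ have cross-correlations that are summable across dyadic scales, so $\sum_n\mathbf{1}_{H_n}/n=\infty$ almost surely; hence $\sum_nP(A_n\mid\mathcal F_{2T_{n-1}})=\infty$ a.s.\ and L\'evy's lemma gives $P(\limsup_nA_n)=1$, completing the proof. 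As an alternative one could estimate $P(A_m\cap A_n)$ directly by a mixed-moment bound, use the Kochen--Stone lemma to get $P(\limsup_nA_n)>0$, and then invoke a zero--one law of the type used in \cite{MT02}.

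I expect the main obstacle to be the conditional estimates in the third step: one must show that the occupation of $B_{\delta_n}(0)$ during block $n$ depends only weakly on the full profile $U_{2T_{n-1}}(\cdot)$ and not merely on its height at the origin, which amounts to controlling the heat average of $U_{2T_{n-1}}$ at scales $r\simeq T_n$ and showing the good event $H_n$ has probability bounded below uniformly in $n$ --- most cleanly via the scaling property. A secondary technical point is the two-point bound near the diagonal scale $|x-y|+|t-s|^{1/2}\simeq T_n^{1/2}$, where one needs the conditional variance to be comparable to $|x-y|+|t-s|^{1/2}$; this follows from \eqref{eq:BoundVariance} and the estimate $|\sigma_1-\sigma_2|\lesssim\gamma^2/\sigma$, while away from that scale the bound degrades harmlessly to the product of the one-point bounds. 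If instead one takes the Kochen--Stone route, the obstacle shifts to establishing the required zero--one law, for which \cite{MT02} already supplies a template.
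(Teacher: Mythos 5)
Your overall architecture --- a second--moment lower bound for the probability of entering a shrinking ball, the Gaussian two--point estimate $P(U_t(x)\in B_\delta, U_s(y)\in B_\delta)\lesssim(\delta/\sigma)^{6}\min\{1,(\delta/\rho^{1/2})^{6}\}$ with $\rho=|x-y|+|t-s|^{1/2}$, and a zero--one law to upgrade a positive probability to probability one --- is the same as the paper's, and your reduction to integer $\alpha$ and to bounded (rather than vanishing) liminf are both fine. Where you diverge, and where a genuine gap appears, is in the decomposition. You work block by block on $[T_n,2T_n]$, which yields only $P(A_n)\gtrsim 1/n$, and you must then convert $\sum_n P(A_n)=\infty$ into $P(\limsup_nA_n)=1$. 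The paper instead spreads the second--moment argument over a single multiplicative range $[N,N^2]$, using a non--uniform grid whose density encodes the weight $t^{6\alpha}\,dt$ (so that each dyadic sub--scale contributes equally), obtaining $\sum P(\mathcal R_{i,j}^{(N)})\simeq\log N$, $\sum P(\mathcal R_{i,j}^{(N)}\cap\mathcal R_{i',j'}^{(N)})\lesssim(\log N)^2$, hence $P(\mathcal R(N,\delta))\gtrsim 1$ from the inclusion--exclusion lower bound (Lemma~\ref{lem:IncluExclu}), and then the tail zero--one law finishes. This design bypasses the second Borel--Cantelli step entirely.

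Your proposed closure of that step is where the argument is incomplete. For the L\'evy conditional Borel--Cantelli route you must show (i) that on $H_n\in\mathcal F_{2T_{n-1}}$ the conditional two--point moment estimates still yield $P(A_n\mid\mathcal F_{2T_{n-1}})\gtrsim 1/n$ --- this is plausible because the heat--smoothed drift is $\lesssim T_n^{1/4}$ on $H_n$ and the conditional noise has the same covariance scaling, but it is not a one--line consequence of the unconditional bound --- and more seriously (ii) that $\sum_n\mathbf 1_{H_n}/n=\infty$ a.s. Your justification for (ii), ``the indicators $\mathbf 1_{H_n}$ have cross--correlations that are summable across dyadic scales,'' is asserted rather than proved, and in fact the events $H_n$ all depend on the full profile $U_{2T_{n-1}}(\cdot)$, whose correlations decay only polynomially; establishing this would require a quantitative decorrelation lemma that is not in the paper and is nontrivial. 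Your alternative route (Kochen--Stone plus the tail zero--one law of \cite{MT02}) would indeed work, but bounding $\sum_{m\neq n}P(A_m\cap A_n)$ requires exactly the cross--scale covariance estimate that the paper already performs inside the single block $[N,N^2]$; once you write that out you have essentially reproduced the paper's computation, and the per--block decomposition is an unnecessary detour. In short: your first two steps are sound and morally identical to Subsection~\ref{subsec:firstOrderTerm} and the covariance bound, but your third step as written has a real gap, and the cleanest fix is to adopt the paper's one--shot $[N,N^2]$ second--moment argument rather than trying to re--glue dyadic blocks.
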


The remainder of this paper is organized as follows. In Sections \ref{sec:atleast.25} and \ref{sec:lessthan.25}, we prove separate cases of Theorem \ref{thm:FirstBigTheorem}. We then prove Theorem \ref{thm:SecBigTheorem} in \ref{sec:ifdis6}. Finally, we discuss previous results, open questions, and conjectures in \ref{sec:openquest}.  

\section{Proof of Theorem \ref{thm:FirstBigTheorem} when $\alpha \ge 1/4$}\label{sec:atleast.25}
Define $E_{\epsilon}$ be the event that there exist sequences $\{x_n\}, \{t_n\}$ with $t_n \nearrow \infty$ such that $U_{t_n}(x_n) \in B_{\epsilon t_n^{1/4}}(0)$, i.e. $|U_{t_n}(x_n)|/t_n^{1/4} < \epsilon$. Define
\begin{align*}
    \mathcal{G}_N = \sigma&\{U_0(x): |x| > N \} \nnend
    &\vee \sigma\{W(\phi): \phi(t, x) = 0 \text{ if } 0 \le t \le N \text{ and } |x| \le N \}, \nnend 
    \intertext{and}
    \mathcal{G} &= \bigcap_{N \ge 1} \mathcal{G}_N,
\end{align*}
Since $U_0$ and $W$ are independent, using the same arguments to prove Kolmogorov's 0-1 law for the Brownian tail $\sigma$-field, we can show that $\mathcal{G}$ is trivial.
\begin{lemma}\label{lem:tailevent}
$E_{\epsilon}$ belongs to the tail $\sigma$-field $\mathcal{G}$.
\end{lemma}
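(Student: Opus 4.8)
The plan is to show that $E_\epsilon$ agrees, up to a null set, with an event measurable with respect to $\mathcal{G}_N$ for every $N$, hence lies in $\mathcal{G} = \bigcap_N \mathcal{G}_N$. The key observation is that the defining condition for $E_\epsilon$ — existence of sequences $x_n$, $t_n \nearrow \infty$ with $|U_{t_n}(x_n)|/t_n^{1/4} < \epsilon$ — only constrains the string at large times, so it should not depend on the behavior of $U_0$ on any bounded set $\{|x| \le N\}$ nor on the noise $W$ on any bounded space-time region $[0,N] \times [-N,N]$. To make this precise I would fix $N$ and compare the string $U$ driven by $(U_0, W)$ with the string $\widetilde U$ driven by $(\widetilde U_0, \widetilde W)$, where $\widetilde U_0$ agrees with $U_0$ off $\{|x| \le N\}$ (and is, say, set to an independent bridge-type modification inside), and $\widetilde W$ agrees with $W$ off $[0,N]\times[-N,N]$. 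Both are $\mathcal{G}_N$-measurable after the modification, and I must argue that the event $E_\epsilon$ has the same indicator for $U$ and for $\widetilde U$ almost surely.

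The main step is to control the difference $D_t(x) := U_t(x) - \widetilde U_t(x)$ and show it vanishes in a suitable sense as $t \to \infty$, uniformly in $x$. Writing both solutions via the mild formulation, $D_t(x) = \int G_t(x-z)\,(U_0(z) - \widetilde U_0(z))\,dz + \int_0^t \int G_{t-r}(x-z)\,(W - \widetilde W)(dz\,dr)$, and both source terms are supported in the bounded region $\{|z| \le N\}$ respectively $[0,N]\times[-N,N]$. The heat kernel smoothing then gives, for $t \ge 2N$ say, a bound of the form $\sup_x |D_t(x)| \le C(N,\omega)\, t^{-\beta}$ for some $\beta > 0$ (the first term decays like $t^{-1/2}$ times the total mass of $U_0 - \widetilde U_0$ on $[-N,N]$, which is a.s. finite since $U_0$ is locally a Brownian motion; the stochastic term has variance $\int_0^N \int_{|z|\le N} G_{t-r}(x-z)^2\,dz\,dr \lesssim N^2 t^{-1/2}$, and a union/Borel–Cantelli argument along $t \in \mathbb{N}$ together with a continuity estimate upgrades this to a uniform-in-$x$, all-large-$t$ bound that goes to $0$). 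In particular $\sup_x |D_t(x)|/t^{1/4} \to 0$ a.s.

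Granting that decay, the equivalence of the events follows: if $U_{t_n}(x_n)/t_n^{1/4} < \epsilon$ along some $t_n \nearrow \infty$, then $|\widetilde U_{t_n}(x_n)|/t_n^{1/4} \le |U_{t_n}(x_n)|/t_n^{1/4} + \sup_x|D_{t_n}(x)|/t_n^{1/4} < \epsilon + o(1)$, so $\widetilde U$ enters the slightly enlarged tube infinitely often; running the argument for a sequence $\epsilon' \downarrow \epsilon$ and using that $E_{\epsilon'} \supseteq E_\epsilon$ shows $E_\epsilon$ differs from the corresponding event for $\widetilde U$ by a null set. Since the latter is $\mathcal{G}_N$-measurable, $E_\epsilon \in \mathcal{G}_N$ modulo null sets; intersecting over $N$ and using completeness of $\mathcal{F}$ gives $E_\epsilon \in \mathcal{G}$. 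I expect the main obstacle to be the uniform-in-$x$ control of the stochastic convolution $D_t(x)$ for all large $t$ simultaneously — the variance bound is easy pointwise, but turning it into an almost-sure uniform statement requires a modulus-of-continuity / chaining estimate (or a Gaussian maximal inequality exploiting that $D_t$ is, after subtracting the deterministic part, a Gaussian field with explicitly controllable increments), which is the one genuinely technical point; everything else is bookkeeping with the mild formulation and Borel–Cantelli.
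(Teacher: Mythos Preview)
Your approach is the one the paper has in mind: it simply cites Lemma~5 of \cite{MT02}, whose proof proceeds exactly by coupling $U$ to a string $\widetilde U$ built from the data outside the bounded region and showing, via the mild formulation and heat-kernel decay, that $\sup_x|U_t(x)-\widetilde U_t(x)|\to 0$ almost surely as $t\to\infty$. The one place to tighten is your final step: from $E_\epsilon(U)\subseteq E_{\epsilon'}(\widetilde U)$ for every $\epsilon'>\epsilon$ you cannot conclude $E_\epsilon(U)=E_\epsilon(\widetilde U)$ up to null sets, because the strict inequality in the definition of $E_\epsilon$ allows boundary behaviour (e.g.\ $|U_{t_n}(x_n)|/t_n^{1/4}=\epsilon-1/n$ would put $\omega$ in $E_\epsilon(U)$ without forcing $\omega\in E_\epsilon(\widetilde U)$). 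The clean fix is to observe that your decay estimate already gives
\[
\liminf_{t\to\infty}\inf_{x}\frac{|U_t(x)|}{t^{1/4}}
=\liminf_{t\to\infty}\inf_{x}\frac{|\widetilde U_t(x)|}{t^{1/4}}
\quad\text{a.s.},
\]
so this $\liminf$ is $\mathcal G_N$-measurable (modulo null sets) for every $N$, hence $\mathcal G$-measurable and a.s.\ constant; the events $E_\epsilon$ are then determined by the value of that constant, and the zero--one conclusion used downstream follows immediately.
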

\begin{proof}
The proof of this lemma is similar to that of {\bf Lemma 5} in \cite{MT02}.
\end{proof}
\begin{proof}[Proof of Theorem \ref{thm:FirstBigTheorem} when $\alpha \ge 1/4$] It suffices to show the result for $\alpha = 1/4$. For any $\epsilon > 0$, 
\begin{align*} 
    P\left(E_{\epsilon} \right) &\ge P\big(\inf_{x \in\mathbb{R}} |U_n(x)|/n^{1/4} < \epsilon \text{ for infinitely many } n \in \mathbb{Z}^+\big). \nnend
    \intertext{By Fatou's Lemma,}
    P\left(E_{\epsilon}\right) &\ge \limsup_{n \rightarrow \infty} P\left(\inf_{x \in\mathbb{R}} |U_n(x)|/n^{1/4} < \infty \right). \nnend
    \intertext{By scaling,}
    P\left(E_{\epsilon} \right) &\ge \limsup_{n \rightarrow \infty} P\left(\inf_{x \in \mathbb{R}} \left|U_1(x/\sqrt{n})\right| < \epsilon\right) \nnend 
    &= \limsup_{n \rightarrow \infty} P\left(\inf_{x \in \mathbb{R}} \left|U_1(x)\right| < \epsilon\right) \nnend
    &= P\left(\inf_{x \in \mathbb{R}} |U_1(x)| < \epsilon\right) \nnend
    &\ge P\left(|U_1(0)| < \epsilon \right) \nnend
    &= P\left(|U_1^{(1)}(0)| < \epsilon \right)^d > 0,
\end{align*}
the last inequality follows since $U_1^{(1)}(0)$ is a non-degenerate centered Gaussian random variable. \newline
From Lemma \ref{lem:tailevent}, since $E_{\epsilon}$ is a tail event in $\mathcal{G}$, we have that 
\begin{equation*}
    P(E_{\epsilon}) = 1,
\end{equation*}
which holds for every $\epsilon > 0$, concluding our proof.
\end{proof}

\section{Proof of Theorem \ref{thm:FirstBigTheorem} when $0 < \alpha < 1/4$}\label{sec:lessthan.25}
We start with the following lemma.
\begin{lemma}\label{lem:belowtn}
Define the sequence $\{t_n\}_{n \ge 1}$ as follows.
\begin{equation*}
    \begin{cases}
    t_1 = 1 \\
    t_{n+1} = t_n + t_n^{4\alpha} \ (n \ge 1).
    \end{cases}
\end{equation*}
Then $t_n = \Omega\left(n^{1/(1-4\alpha)}\right)$, i.e. there exists a constant $c = c(\alpha) > 0$ such that 
\begin{equation}\label{eq:boundtnBelow}
    t_n \ge c n^{1/(1-4\alpha)}
\end{equation} for all $n \ge 1$.
\end{lemma}

\begin{proof}
Denote $r = 1/(1-4\alpha) > 1$. We show by induction that \eqref{eq:boundtnBelow} holds for $c = (1/2)^{\lceil r \rceil r} \in (0, 1)$, where $\lceil \cdot \rceil$ is the ceiling function. 

\eqref{eq:boundtnBelow} trivially holds for $n = 1$. Suppose it holds for $n = k \ge 1$, i.e. $t_k \ge ck^r$. 
We see that
\begin{equation*}
    \left(1 + \dfrac{1}{k} \right)^r \le \left(1 + \dfrac{1}{k} \right)^{\lceil r \rceil} = 1 + \sum_{i = 1}^{\lceil r \rceil} {\lceil r \rceil \choose i}\dfrac{1}{k^i} \le 1 + \dfrac{1}{k}\sum_{i = 0}^{\lceil r \rceil}{\lceil r \rceil \choose i} = 1 + \dfrac{2^{\lceil r \rceil}}{k} = 1 + \dfrac{c^{-1/r}}{k}.
\end{equation*}
This implies
\begin{equation}\label{eq:derboundtn}
    c(1+k)^r \le ck^r + c^{1 - 1/r}k^{r-1}.
\end{equation}
By the induction hypothesis,
\begin{equation}\label{eq:termboundtn}
    t_{k+1} = t_k + t_k^{4\alpha} \ge ck^r + \left(ck^r\right)^{1 - 1/r}.
\end{equation}
From \eqref{eq:derboundtn} and \eqref{eq:termboundtn}, we see that
\begin{equation*}
    t_{k+1} \ge c(1+k)^r,
\end{equation*}
which completes the proof of Lemma \ref{lem:belowtn}.
\end{proof}

\begin{proof}[Proof of Theorem \ref{thm:FirstBigTheorem} when $0 < \alpha < 1/4$]
Our strategy closely follows that found in Theorem 3 of \cite{MT02}. We find a grid of points and show that recurrence of this string along this grid is impossible, then control the regions between these grid points.  

Define the sequence $\{t_n\}_{n \ge 1}$ as in Lemma \ref{lem:belowtn}. On the lines $t = t_{n+1}$, choose points $kt_n^{2\alpha} \ (k \in \mathbb{Z})$. Let $R_{n, k}$ be the rectangles with vertices $(t_n, kt_n^{2\alpha})$, $(t_{n+1}, t_n^{2\alpha})$, $(t_n, (k+1)t_n^{2\alpha})$, $(t_{n+1}, (k+1)t_n^{2\alpha})$. Define $m(n, k) = \lfloor \left(n^{1/2} + |k|\right)^{d/6 - 1.1} \rfloor$. We divide $R_{n, k}$ into $m(n, k)^3$ rectangles, each of them a translate of $\left[0, t_n^{4\alpha}m(n, k)^{-2} \right] \times \left[0, t_n^{2\alpha}m(n, k)^{-1}\right]$. For these rectangles, the points with the largest $(x, t)$ coordinates are $\left(t_n + it_n^{4\alpha}m(n, k)^{-2}, kt^{2\alpha} + jt_n^{2\alpha}m(n, k)^{-1}\right)$ $=: (t_{(n, k, i)}, x_{(n, k, j)})$ $(1 \le i \le m(n, k)^2; 1 \le j \le m(n, k))$; these will be our grid points. 

\subsection{Transience of $U_t(x)$ at grid points}\label{subsec:gridpoint}
By Proposition \ref{prop:trivialBound},
\begin{align}\label{eq:BorCanGrid}
    \sum_{n \ge 1}&\sum_{k \in \mathbb{Z}}\sum_{i=1}^{m(n, k)^2}\sum_{j=1}^{m(n, k)}P\left(\left|U_{t_{(n, k, i)}}(x_{(n, k, j)})
    \right| \le B_{2\delta t_{(n, k, i)}^{\alpha}}(0)\right) \nnend 
    &\lesssim \sum_{n \ge 1}\sum_{k \in \mathbb{Z}}\sum_{i=1}^{m(n, k)^2}\sum_{j=1}^{m(n, k)} \left(\dfrac{\delta{t_{(n, k, i)}}^{\alpha}}{\left(t_{(n, k, i))}^{1/2} + |x_{(n, k, j)}| \right)^{-1/2}} \right)^d \nnend
    &\lesssim \delta^d\sum_{n \ge 1}\sum_{k \ge 0}\sum_{i=1}^{m(n, k)^2}\sum_{j=1}^{m(n, k)} \left(\dfrac{{t_{(n, k, i)}}^{\alpha}}{\left(t_{(n, k, i))}^{1/2} + |x_{(n, k, j)}| \right)^{-1/2}} \right)^d,
\end{align}
Now, 
\begin{equation}\label{eq:EqAllCombi}
    \dfrac{{t_{(n, k, i)}}^{2\alpha}}{t_{(n, k, i))}^{1/2} + |x_{(n, k, j)}|} = \dfrac{1}{t_{(n, k, i))}^{1/2 - 2\alpha} + |x_{(n, k, j)}|{t_{(n, k, i)}}^{-2\alpha}}.
\end{equation}
We bound the terms in the denominator in \eqref{eq:EqAllCombi} as follows. By Lemma \ref{lem:belowtn},
\begin{equation}\label{eq:firstterm}
    t_{(n, k, i))}^{1/2 - 2\alpha} \gtrsim \left(n^{1/(1-4\alpha)}\right)^{1/2 - 2\alpha} = n^{1/2}.
\end{equation}
Also,
\begin{align}\label{eq:secondterm}
    |x_{(n, k, j)}|{t_{(n, k, i)}}^{-2\alpha} &= t_n^{2\alpha}\left(k + jm(n, k)^{-1} \right)\left(t_n + it_n^{4\alpha}m(n, k)^{-2} \right)^{-2\alpha}. \nnend 
    \intertext{Since $1 \le i \le m(n, k)^{-2}$,}
    |x_{(n, k, j)}|{t_{(n, k, i)}}^{-2\alpha} &\ge t_n^{2\alpha}\left(k + jm(n, k)^{-1} \right)\left(t_n + t_n^{4\alpha} \right)^{-2\alpha} \nnend
    &= \left(t_n/t_{n+1} \right)^{2\alpha}\left(k + jm(n, k)^{-1}\right). \nnend 
    \intertext{Since $t_n/t_{n+1} \ge 1/2$,}
    |x_{(n, k, j)}|{t_{(n, k, i)}}^{-2\alpha} &\gtrsim k + jm(n, k)^{-1} \ge k. \nnend
\end{align}
From \eqref{eq:EqAllCombi}, \eqref{eq:firstterm}, and \eqref{eq:secondterm},
\begin{equation}\label{eq:eachterm}
    \dfrac{{t_{(n, k, i)}}^{2\alpha}}{t_{(n, k, i))}^{1/2} + |x_{(n, k, j)}|} \lesssim \dfrac{1}{n^{1/2} + k}.
\end{equation}
From \eqref{eq:BorCanGrid} and \eqref{eq:eachterm},
\begin{align}\label{eq:BorCanGridOne}
   \sum_{n \ge 1}\sum_{k \in \mathbb{Z}}&\sum_{i=1}^{m(n, k)^2}\sum_{j=1}^{m(n, k)}P\left(\left|U_{t_{(n, k, i)}}(x_{(n, k, j)})
    \right| \le B_{2\delta t_{(n, k, i)}^{\alpha}}(0)\right) \nnend
    &\lesssim  \delta^d\sum_{n \ge 1}\sum_{k \ge 0}\sum_{i=1}^{m(n, k)^2}\sum_{j=1}^{m(n, k)} \left(n^{1/2} + k\right)^{-d/2} \nnend
    &= \delta^d\sum_{n \ge 1}\sum_{k \ge 0} m(n, k)^3\left(n^{1/2} + k\right)^{-d/2} \nnend 
    &\le \delta^d\sum_{n \ge 1}\sum_{k \ge 0}\left(n^{1/2} + k\right)^{d/2 - 3.3}\left(n^{1/2} + k\right)^{-d/2} \nnend 
    &= \delta^d\sum_{n \ge 1}\sum_{k \ge 0}\left(n^{1/2} + k\right)^{- 3.3} < \infty,
\end{align}
the last inequality follows from the integral test
\begin{equation*}
    \int_{1}^{\infty}\int_{0}^{\infty}\dfrac{dydx}{\left(x^{1/2} + y \right)^{3.3}} < \infty.
\end{equation*}
By the Borel-Cantelli lemma, the string $U_t(x)$, evaluated at these grid points, will eventually leave their corresponding boxes $B_{2\delta t_{(n, k, i)}^{\alpha}}(0)$ for large (random) $t$. 

\subsection{Controlling the regions between grid points}\label{subsec:betweengridpoints}
From the display after $(6.8)$ in \cite{MT02}, we can find constants $c_1, c_2 > 0$ such that
\begin{equation}\label{eq:boundExp}
    P\left(\sup_{(t, x) \in [0,1]^2} |U_t(x)| \ge \delta \right) \le c_1 \exp\left(-c_2\delta^2 \right).
\end{equation}
Denote $R_{(n, k, i, j)}$ as the translation of $\left[0, t_n^{4\alpha}m(n, k)^{-2} \right] \times \left[0, t_n^{2\alpha}m(n, k)^{-1}\right]$ with largest coordinates $(t_{(n, k, i)}, x_{(n, k, j)})$. Then by translation,
\begin{align*}
    \sum_{n \ge 1}\sum_{k \in \mathbb{Z}}&\sum_{i=1}^{m(n, k)^2}\sum_{j=1}^{m(n, k)}P \left( \sup_{(t, x) \in R_{(n, k, i, j)}} \left|U_t(x) - U_{t_{(n, k, i)}}(x_{(n, k, j)}) \right| \ge \delta t_{(n, k, i)}^{\alpha} \right) \nnend 
    = \sum_{n \ge 1}\sum_{k \in \mathbb{Z}}&\sum_{i=1}^{m(n, k)^2}\sum_{j=1}^{m(n, k)}P \left( \sup_{(t, x) \in \left[0, t_n^{4\alpha}m(n, k)^{-2} \right] \times \left[0, t_n^{2\alpha}m(n, k)^{-1}\right]} \left|U_t(x)\right| \ge \delta t_{(n, k, i)}^{\alpha} \right). \nnend 
    \intertext{By the scaling of $U_t(x)$, the preceding quadruple sum becomes}
    &= \sum_{n \ge 1}\sum_{k \in \mathbb{Z}}\sum_{i=1}^{m(n, k)^2}\sum_{j=1}^{m(n, k)}P \left( \sup_{(t, x) \in [0, 1]^2} \left|U_t(x)\right| \ge \delta t_{(n, k, i)}^{\alpha} m(n, k)^{1/2}t_n^{-\alpha} \right). \nnend 
    \intertext{By \eqref{eq:boundExp}, we can bound the sum above by}
    &\lesssim \sum_{n \ge 1}\sum_{k \in \mathbb{Z}}\sum_{i=1}^{m(n, k)^2}\sum_{j=1}^{m(n, k)} \exp \left( -c_2 \delta^2 m(n, k)(t_{(n, k, i)} / t_n)^{2\alpha} \right) \nnend
    &= \sum_{n \ge 1}\sum_{k \in \mathbb{Z}}\sum_{i=1}^{m(n, k)^2}\sum_{j=1}^{m(n, k)} \exp \left( -c_2 \delta^2 m(n, k)\left(\left(t_n + it_n^{4\alpha}m(n, k)^{-2} \right) / t_n\right)^{2\alpha} \right) \nnend 
    &\le \sum_{n \ge 1}\sum_{k \in \mathbb{Z}}\sum_{i=1}^{m(n, k)^2}\sum_{j=1}^{m(n, k)} \exp \left( -c_2 \delta^2 m(n, k) \right) \nnend 
    &= \sum_{n \ge 1}\sum_{k \in \mathbb{Z}} m(n, k)^3 \exp \left( -c_2 \delta^2 m(n, k) \right) \nnend 
    &\lesssim \sum_{n \ge 1}\sum_{k \ge 0} m(n, k)^3 \exp \left( -c_2 \delta^2 m(n, k) \right) \nnend 
    &\simeq \sum_{n \ge 1}\sum_{k \ge 0} \left(n^{1/2} + k \right)^{d/2 - 3.3} \exp \left( -c_2 \delta^2 \left(n^{1/2} + k \right)^{d/6-1.1} \right) < \infty,
\end{align*}
the last inequality can be shown using the integral test for convergence of series. 

By the Borel-Cantelli lemma, there exists a (random) $N_0 \in \mathbb{Z}^+$ such that for all $n \ge N_0$, $k \in \mathbb{Z}$, $i \le m(n, k)^2$, and $j \le m(n, k)$, we have that
\begin{equation}\label{eq:explainStuff}
    \sup_{(t, x) \in R_{(n, k, i, j)}}\left|U_t(x) - U_{t_{(n, k, i)}}(x_{(n,k,j)}) \right| < \delta t_{(n, k, i)}^{\alpha}.
\end{equation}
If $U_t(x)$ evaluated at the grid point $(t_{(n, k,i)}, x_{(n, k ,j)})$ is outside of the box $B_{2\delta t_{(n, k, i)}^{\alpha}}(0)$ and \eqref{eq:explainStuff} holds, then none of the values $U_t(x)$, where $(t, x) \in R_{(n, k, i, j)}$, can be within $\delta t^{\alpha} < \delta t_{(n, k, i)}^{\alpha}$ of $0$.

Combining Subsections \ref{subsec:gridpoint} and \ref{subsec:betweengridpoints}, we see that the probability of recurrence is zero, thus completing the proof.
\end{proof}

\section{A result when $d = 6$}\label{sec:ifdis6}
We start with the following lemmas.

\begin{lemma}\label{lem:boundForTwoPoints}
Let $\delta \in (0, 1)$ and $\alpha > 0$. Then
\begin{equation}\label{eq:boundForTwoPoints}
    P\left(U_t(x) \in B_{\delta t^{-\alpha}}(0), U_{t+s}(x+y) \in B_{\delta (t+s)^{-\alpha}}(0) \right) \lesssim C \dfrac{(t+s)^{-6\alpha} t^{-6\alpha}}{\left(s^{1/2} + |y| \right)^3\left(t^{1/2} + |x| \right)^3}
\end{equation}
holds under one of the following conditions:
\begin{enumerate}
    \item $t \ge 1, |x|, |y| \le 2t^{1/2},$ and $0 \le s \le t$;
    \item $|y| \le 2s^{1/2}$,
\end{enumerate}
where $C = C(\delta) > 0$ is dependent only on $\delta$.
\end{lemma}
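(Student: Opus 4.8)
The plan is to reduce \eqref{eq:boundForTwoPoints} to a two–dimensional Gaussian density estimate and then to a lower bound for a $2\times2$ determinant. Since $U$ has i.i.d.\ $\mathbb{R}$–valued coordinates and the boxes are products of intervals, the probability in question equals $p^{6}$, where
\[
p := P\!\left(U_t^{(1)}(x)\in(-\delta t^{-\alpha},\delta t^{-\alpha}),\ U_{t+s}^{(1)}(x+y)\in(-\delta (t+s)^{-\alpha},\delta (t+s)^{-\alpha})\right).
\]
The pair $(\xi,\eta):=\left(U_t^{(1)}(x),U_{t+s}^{(1)}(x+y)\right)$ is a centered Gaussian vector; bounding its density by its maximum value $\left(2\pi\sqrt{\det\Sigma}\,\right)^{-1}$, $\Sigma$ its covariance matrix, gives $p\le 4\delta^2 t^{-\alpha}(t+s)^{-\alpha}\big(2\pi\sqrt{\det\Sigma}\,\big)^{-1}$. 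Raising to the sixth power, \eqref{eq:boundForTwoPoints} follows once we show
\begin{equation*}
    \det\Sigma \gtrsim (t^{1/2}+|x|)(s^{1/2}+|y|)
\end{equation*}
with an \emph{absolute} implied constant, all the $\delta$–dependence being carried by the resulting factor $\delta^{12}$, which (as $\delta\in(0,1)$) is absorbed into $C(\delta)$.

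To bound $\det\Sigma$ from below I would factor $\det\Sigma=\text{Var}(\xi)\,\text{Var}(\eta\mid\xi)$. By \eqref{eq:BoundVariance}, applied with second space–time point $(0,0)$ (where $U$ vanishes), $\text{Var}(\xi)=\text{Var}\!\left(U_t^{(1)}(x)\right)\ge c_1(t^{1/2}+|x|)$, so it remains to prove $\text{Var}(\eta\mid\xi)\gtrsim s^{1/2}+|y|$. For this, use the mild–solution (Markov) representation to split $\eta=\eta_1+\eta_2$ with $\eta_1=E[\eta\mid\mathcal F_t]=\int G_s(x+y-w)\,U_t^{(1)}(w)\,dw$ and $\eta_2=\int_t^{t+s}\!\int G_{t+s-r}(x+y-z)\,W^{(1)}(dz\,dr)$, where $\eta_2$ is independent of $\mathcal F_t$, hence of $(\xi,\eta_1)$. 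An Itô–isometry computation gives $\text{Var}(\eta_2)=(2\pi)^{-1/2}s^{1/2}$, whence $\text{Var}(\eta\mid\xi)=\text{Var}(\eta_1\mid\xi)+(2\pi)^{-1/2}s^{1/2}$. Under condition (2) — and more generally whenever $|y|\le s^{1/2}$, so $s^{1/2}+|y|\le 3s^{1/2}$ — the second term alone yields $\text{Var}(\eta\mid\xi)\gtrsim s^{1/2}+|y|$, and combined with the bound on $\text{Var}(\xi)$ this disposes of all such cases.

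It remains to treat condition (1) in the range $|y|>s^{1/2}$, where $s^{1/2}+|y|\simeq|y|$ and the target reduces to $\text{Var}(\eta_1\mid\xi)\gtrsim|y|$. Here I would invoke the scaling property: passing from $(U_t)$ to $\left(L^{-1}U_{L^4 t}(L^2 x)\right)$ with $L=t^{1/4}$ reduces the estimate to $t=1$ with parameters $(x',y',s'):=(x/\sqrt t,\ y/\sqrt t,\ s/t)$ confined to the \emph{compact} set $K=\{|x'|\le 2,\ |y'|\le 2,\ 0\le s'\le 1\}$, and the goal becomes $\text{Var}(\eta_1\mid\xi)\gtrsim|y'|$ uniformly on $K$ (in the regime $s'<|y'|^2$). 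Write $\text{Var}(\eta_1\mid\xi)=\text{Var}(\eta_1-\xi)\,(1-\rho^2)$, $\rho=\text{corr}(\eta_1-\xi,\xi)$. First, since at the fixed time $t=1$ the identity $E[(U_1^{(1)}(w)-U_1^{(1)}(w'))^2]=|w-w'|$ makes $w\mapsto U_1^{(1)}(w)$ have the increments of a two–sided Brownian motion, restricting the heat average defining $\eta_1-\xi=\int G_{s'}(x'+y'-w)\big(U_1^{(1)}(w)-U_1^{(1)}(x')\big)\,dw$ to $\{|w-(x'+y')|\le|y'|/2\}$ — where the Brownian covariance is $\gtrsim|y'|$ and the heat mass $\int_{|v|\le|y'|/2}G_{s'}(v)\,dv$ is bounded below by using $s'<|y'|^2$ — gives $\text{Var}(\eta_1-\xi)\gtrsim|y'|$. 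Second, $\rho^2$ is a continuous function on $K$ that is $<1$ off the locus $\{s'=y'=0\}$, because $\eta_1-\xi$ and $\xi$ are never proportional (their white–noise kernels are Gaussian densities with different centres/variances), and it extends continuously with value $0$ on that locus, since there $\text{Cov}(\eta_1-\xi,\xi)=O(|y'|)$ while $\sqrt{\text{Var}(\eta_1-\xi)\text{Var}(\xi)}\gtrsim|y'|^{1/2}$; hence $\rho^2\le 1-\theta$ for an absolute $\theta>0$. Multiplying, $\text{Var}(\eta_1\mid\xi)\gtrsim|y'|$, which is the required bound after undoing the scaling.

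The hard part is this last step: showing that the correlation between the two Gaussian values stays bounded away from $\pm1$ uniformly, and in particular controlling its decay rate near the ``diagonal'' $(s,y)\to(0,0)$ — this is what forces the scaling reduction to a compact parameter set and the continuity/compactness argument, and what must be made quantitative. The filtration decomposition handles the temporal scale $s^{1/2}$ (and condition (2)) cleanly, and the fixed–time Brownian structure supplies the spatial scale $|y|$; the non-degeneracy near the diagonal is where the real work lies, while the density bound, the computation of $\text{Var}(\eta_2)$, and the Brownian covariance estimate are routine.
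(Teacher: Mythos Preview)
Your argument is correct, and for condition (2) it matches the paper's proof almost verbatim (Markov/filtration decomposition, It\^o isometry giving $\text{Var}(\eta_2)=(2\pi)^{-1/2}s^{1/2}$, then Proposition~\ref{prop:trivialBound} for the $\xi$ factor). For condition (1), however, you take a genuinely different route. The paper first isolates a ``compact-parameter'' lemma (Lemma~\ref{lem:boundForSmallTime}): for $s,t\in[1,2]$, $|x|,|y|\le 2$ it proves directly that $\text{Var}\!\left(U_t^{(i)}(x)\mid U_s^{(i)}(y)\right)\gtrsim |t-s|^{1/2}+|x-y|$, using the closed formula
\[
\text{Var}(X\mid Y)=\frac{\bigl(\rho_{X,Y}^2-(\sigma_X-\sigma_Y)^2\bigr)\bigl((\sigma_X+\sigma_Y)^2-\rho_{X,Y}^2\bigr)}{4\sigma_Y^2},
\]
together with $\rho_{X,Y}^2\ge c_1(|t-s|^{1/2}+|x-y|)$ from \eqref{eq:BoundVariance} and, crucially, a Mean Value Theorem bound $|\sigma_X-\sigma_Y|\lesssim |t-s|+|x-y|$ coming from the \emph{smoothness} of $F$ in \eqref{eq:ExpressVariance}; this dominates $(\sigma_X-\sigma_Y)^2$ near the diagonal and a single continuity/compactness step handles the complement. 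Condition (1) then follows by one application of scaling. Your approach instead keeps the Markov splitting $\eta=\eta_1+\eta_2$ inside condition (1), extracts the $s^{1/2}$ scale from $\eta_2$, and recovers the $|y|$ scale from a separate analysis of $\text{Var}(\eta_1\mid\xi)=\text{Var}(\eta_1-\xi)(1-\rho^2)$ after scaling to $t=1$. The payoff of the paper's route is economy: one explicit formula plus the smoothness of $F$ gives both scales $|t-s|^{1/2}$ and $|x-y|$ at once, with no need to study $\eta_1$ or its correlation with $\xi$. The payoff of your route is structural uniformity---the same filtration decomposition drives both conditions---at the cost of a more delicate compactness argument (you must check that $\rho^2$ extends continuously to $0$ at the diagonal within the cone $s'<|y'|^2$, which you do, but which requires the extra estimate $\text{Cov}(\eta_1-\xi,\xi)=O(|y'|)$).
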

The proof of this lemma is deferred to Appendix \ref{sec:proofboundForTwoPoints}. 

\begin{lemma}\label{lem:growthInPower}
For any $\alpha \in \mathbb{Z}^+$, there exists a constant $C = C(\alpha) > 0$ such that 
\begin{equation*}
    \left(\left(1 + (N^2 - N)^{-(6\alpha+1)} \right)^{1/(6\alpha + 1)} -1\right)^{-1} \lesssim N^C. 
\end{equation*}
\end{lemma}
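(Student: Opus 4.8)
We want to bound from above the quantity
$$
\Bigl(\bigl(1 + (N^2-N)^{-(6\alpha+1)}\bigr)^{1/(6\alpha+1)} - 1\Bigr)^{-1}
$$
by a power of $N$. The idea is that the inner expression $1 + (N^2-N)^{-(6\alpha+1)}$ is of the form $1+\epsilon$ with $\epsilon = (N^2-N)^{-(6\alpha+1)}$ small (for $N \ge 2$), and $(1+\epsilon)^{1/(6\alpha+1)} - 1$ behaves like $\epsilon/(6\alpha+1)$ for small $\epsilon$; so the reciprocal behaves like $(6\alpha+1)(N^2-N)^{6\alpha+1}$, which is $\lesssim N^{2(6\alpha+1)}$.

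The key inequality I would use is the elementary bound: for $p \ge 1$ and $\epsilon \ge 0$, one has $(1+\epsilon)^{1/p} - 1 \ge \epsilon/(p\,2^{(p-1)/p})$ or, more simply, $(1+\epsilon)^{1/p} \ge 1 + \epsilon/(p(1+\epsilon)^{(p-1)/p})$; but the cleanest route is to use concavity of $x \mapsto x^{1/p}$ on $[1, 1+\epsilon]$, which gives
$$
(1+\epsilon)^{1/p} - 1 \ge \frac{1}{p}(1+\epsilon)^{(1-p)/p}\,\epsilon \ge \frac{\epsilon}{p \cdot 2^{(p-1)/p}} \ge \frac{\epsilon}{2p},
$$
valid for $0 \le \epsilon \le 1$. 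Here $p = 6\alpha+1$ and $\epsilon = (N^2-N)^{-(6\alpha+1)} \le 1$ whenever $N^2 - N \ge 1$, i.e. $N \ge 2$. Taking reciprocals,
$$
\Bigl((1+\epsilon)^{1/p} - 1\Bigr)^{-1} \le \frac{2p}{\epsilon} = 2(6\alpha+1)(N^2-N)^{6\alpha+1} \le 2(6\alpha+1) N^{2(6\alpha+1)},
$$
so the claim holds with $C = 2(6\alpha+1)$ (any $C \ge 2(6\alpha+1)$ works, and the implied constant can absorb the factor $2(6\alpha+1)$, so in fact $C = 2(6\alpha+1)$ or even $C$ slightly larger suffices). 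One should also separately note that for the finitely many small values of $N$ not covered (here only $N=1$, where $N^2-N=0$ makes the expression ill-defined, so presumably the statement is understood for $N \ge 2$), there is nothing to check, or the implied constant is adjusted.

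**Main obstacle.** There is no serious obstacle; the only care needed is (i) checking the range of $N$ for which $\epsilon \le 1$ so that the concavity/Bernoulli estimate applies cleanly, and (ii) making sure the derivative-at-right-endpoint version of the mean value inequality is used in the direction that produces a lower bound on $(1+\epsilon)^{1/p}-1$ (using the derivative at the right endpoint $1+\epsilon$, not at $1$, since $x^{1/p}$ is concave and its derivative is decreasing). An alternative, fully elementary substitute avoiding calculus is: since $(1+\epsilon)^{1/p} = 1 + \eta$ with $\eta > 0$, we have $1 + \epsilon = (1+\eta)^p \le 1 + (2^p-1)\eta$ for $0 \le \eta \le 1$ (convexity/Bernoulli on $[1,2]$), whence $\eta \ge \epsilon/(2^p - 1)$ and $\eta^{-1} \le (2^p-1)/\epsilon = (2^{6\alpha+1}-1)(N^2-N)^{6\alpha+1} \lesssim N^{2(6\alpha+1)}$. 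Either version gives the result with an explicit $C$ depending only on $\alpha$.
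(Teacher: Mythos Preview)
Your proof is correct and in fact cleaner than the paper's. Both arguments establish a lower bound of the form $(1+\epsilon)^{1/p}-1 \gtrsim \epsilon$ with $p=6\alpha+1$ and $\epsilon=(N^2-N)^{-p}$, but they reach it differently. The paper fixes $C=2p+2$ and a specific constant $\rho=4^{1/p}-1$, then uses a binomial expansion of $(N^C+\rho)^p$ to show $(N^C+\rho)^p \le N^{Cp}+N^{(C-2)p}$ for $N\ge 2$; taking $p$-th roots and comparing $N^{-2p}$ with $(N^2-N)^{-p}$ yields $\rho \le N^C\bigl((1+(N^2-N)^{-p})^{1/p}-1\bigr)$, hence the bound with implied constant $1/\rho$. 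Your route---either via concavity of $x\mapsto x^{1/p}$ and the mean value theorem at the right endpoint, or via the convexity bound $(1+\eta)^p \le 1+(2^p-1)\eta$ on $[0,1]$---is more direct, avoids the somewhat ad hoc choice of $C$ and $\rho$, and even produces the slightly sharper exponent $C=2(6\alpha+1)$ rather than $2(6\alpha+1)+2$. The only point to handle, which you note, is that $\epsilon\le 1$ requires $N\ge 2$; the paper's argument also tacitly assumes $N\ge 2$, so nothing is lost.
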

\begin{proof}
Let $K = 6\alpha + 1 \in \mathbb{Z}^+$, and let $C = 2K + 2$. 
Then for any $\rho > 0$,
\begin{align*}
(N^C + \rho)^K &= N^{CK} + \sum_{i=1}^K {K \choose i} N^{C(K-i)}\rho^i \nnend 
&\le N^{CK} + N^{C(K-1)}(1 + \rho)^{K} \nnend 
&\le N^{CK} + N^{(C-2)K} N^{-2} (1 + \rho)^K. \nnend 
\intertext{Choosing $\rho = 4^{1/K} - 1 > 0$, then for all $N \ge 2$,}
(N^C + \rho)^K & \le N^{CK} + N^{(C-2)K}.
\intertext{Thus,}
\rho & \le N^C \left(1 + N^{-2K} \right)^{1/K} - N^C \nnend 
&\le \left(\left(1 + (N^2 - N)^{-K} \right)^{1/K} -1\right)N^C,
\end{align*}
completing our proof of the lemma.
\end{proof}

\begin{lemma}{(An inclusion-exclusion type lower bound)}\label{lem:IncluExclu} Let $\{A_i\}_{1 \le i \le n}$ be events and $A = \bigcup_{i=1}^n A_i$. Then
\begin{equation}
    P(A) \ge \dfrac{\left(\sum_{i=1}^n P(A_i)\right)^2}{\sum_{i=1}^n P(A_i) + 2 \sum_{1\le i < j \le n}P(A_i \cap A_j)}.
\end{equation}
\end{lemma}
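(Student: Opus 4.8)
The plan is to prove Lemma \ref{lem:IncluExclu} by a direct application of the Cauchy--Schwarz inequality applied to the indicator function $\mathbf{1}_A = \mathbf{1}_{\bigcup A_i}$ together with the counting function $S = \sum_{i=1}^n \mathbf{1}_{A_i}$. This is the classical second-moment method (sometimes attributed to Paley--Zygmund or to the Chung--Erd\H{o}s inequality), and the structure of the statement—a squared first moment over a controlled second moment—is the tell-tale sign.

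First I would observe that $S$ vanishes precisely off $A$, so $S = S\,\mathbf{1}_A$ pointwise. Taking expectations and applying Cauchy--Schwarz,
\begin{equation*}
    E[S]^2 = E[S\,\mathbf{1}_A]^2 \le E[S^2]\,E[\mathbf{1}_A^2] = E[S^2]\,P(A).
\end{equation*}
Rearranging gives $P(A) \ge E[S]^2 / E[S^2]$, provided $E[S^2] > 0$ (if $E[S^2] = 0$ then all $A_i$ are null and the bound is trivial, interpreting $0/0$ appropriately, or one simply notes both sides are $0$).

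Next I would expand the two moments in terms of the events. By linearity, $E[S] = \sum_{i=1}^n P(A_i)$. For the second moment, $S^2 = \sum_{i,j} \mathbf{1}_{A_i}\mathbf{1}_{A_j} = \sum_{i,j}\mathbf{1}_{A_i \cap A_j}$, so splitting the diagonal from the off-diagonal terms and using symmetry,
\begin{equation*}
    E[S^2] = \sum_{i=1}^n P(A_i) + 2\sum_{1 \le i < j \le n} P(A_i \cap A_j).
\end{equation*}
Substituting these two identities into $P(A) \ge E[S]^2/E[S^2]$ yields exactly the claimed inequality.

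There is essentially no obstacle here; the only point requiring a word of care is the degenerate case where the denominator is zero, which forces $\sum P(A_i) = 0$ and hence $P(A) = 0$, so the inequality holds (as $0 \ge 0$) once one adopts the convention $0/0 = 0$. Everything else is a one-line Cauchy--Schwarz estimate plus bookkeeping, so I would keep the write-up short and not belabor it.
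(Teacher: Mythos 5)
Your proof is correct, and it is the standard argument for this inequality (often called the Chung--Erd\H{o}s inequality): apply Cauchy--Schwarz to $S\,\mathbf{1}_A$ where $S = \sum_i \mathbf{1}_{A_i}$, then expand $E[S]$ and $E[S^2]$. The paper omits the proof entirely, citing the lemma as standard, so there is nothing in the source to compare against; your write-up, including the handling of the degenerate $E[S^2]=0$ case, is exactly what one would supply.
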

We omit the proof for this standard lemma.
\begin{proof}[Proof of Theorem \ref{thm:SecBigTheorem}] We mimic the same strategy as that in the proof of Theorem $3$ of \cite{MT02}. It suffices to show for $\alpha \in \mathbb{Z}^+$. Here, we fix $\delta \in (0, 1)$ (thus, constants that are only dependent on $\delta$ and/or $\alpha$ are treated as absolute constants). Let $\mathcal{R}(\delta)$ be the event that there exist sequences $\{x_n\}, \{t_n\}$ with $t_n \nearrow \infty$ such that $U_{t_n}(x_n) \in B_{\delta t_n^{-\alpha}}(0)$. As in Lemma \ref{lem:tailevent}, we can show that $\mathcal{R}(\delta)$ is an event in the tail $\sigma$-field $\mathcal{G}$, where $\mathcal{G}$ is defined in Section \ref{sec:atleast.25}.

Denote $k = 1/(6\alpha + 1) \in (0, 1)$. For integers $i, j, N$, define 
\begin{align*}
    \mathcal{R}_{i, j}^{(N)} &= \left\{U_{N + i^k}(j) \in B_{\delta(N + i^k)^{-\alpha}}(0)\right\}, \\
    \mathcal{R}(N, \delta) &= \bigcup_{i: N \le N + i^k \le N^2} \bigcup_{0 \le j \le (N+i^k)^{1/2}} \mathcal{R}_{i, j}^{(N)}. \\
\end{align*}
Using Lemma \ref{lem:IncluExclu}, we show that there exists $p_0 > 0$ such that $P(\mathcal{R}(N, \delta)) \ge p_0 > 0$ for all sufficiently large $N$. Then, since 
\begin{equation*}
    P(\mathcal{R}(\delta)) \ge P(\mathcal{R}(N, \delta) \text{ infinitely often}) \ge p_0 > 0,
\end{equation*}
we get $P(\mathcal{R}(\delta)) = 1$ by the zero-one law for any $\delta > 0$, concluding our proof. To bound $P(\mathcal{R}(N, \delta))$ below, we find bounds for the sum of $P\left(\mathcal{R}_{i, j}^{(N)} \right)$ (which we will call the "first-order term"), and the sum of $P\left(\mathcal{R}_{i, j}^{(N)} \cap \mathcal{R}_{i', j'}^{(N)} \right)$ (which we will call the "covariance term").

\subsection{Bounding the first-order term}\label{subsec:firstOrderTerm}
Denote $r = \lfloor (N^2 - N)^{1/k} \rfloor$. Using the variance estimate in $\eqref{eq:BoundVariance}$, similar to the proof of Proposition \ref{prop:trivialBound}, we get
\begin{equation}
    P\left(\mathcal{R}_{i, j}^{(N)} \right) \simeq \left(N+i^k\right)^{-6\alpha}\left(\left(N + i^k \right)^{1/2} + |j|\right)^{-3},
\end{equation}
where $\simeq$ is defined in Section \ref{sec:intro}. Thus, 
\begin{align}
    &\sum_{0 \le i \le (N^2-N)^{1/k}}\sum_{0 \le j \le (N+i^k)^{1/2}}P\left(\mathcal{R}_{i, j}^{(N)} \right) \nnend 
    \simeq &\sum_{i=0}^r\sum_{0 \le j \le (N+i^k)^{1/2}}\left(N+i^k\right)^{-6\alpha}\left(\left(N + i^k \right)^{1/2} + |j|\right)^{-3} \nnend 
    \simeq &\int_0^{r} \int_0^{(N+i^k)^{1/2}}\left(N+x^k\right)^{-6\alpha}\left(\left(N + x^k \right)^{1/2} + |y|\right)^{-3} dydx \nnend 
    \simeq &\int_0^{r} \left(N+x^k\right)^{-6\alpha-1}dx \nnend 
    \intertext{Setting $z = N + x^k$, then the above is equal to}
    = &\int_N^{N^2} (6\alpha + 1) (z-N)^{6\alpha} z^{-6\alpha - 1} dz \nnend
    \simeq &\int_N^{N^2} (1 - N/z)^{6\alpha} z^{-1}dz. \label{eq:orderOfSimpleSum}
\end{align}
Trivially,
\begin{equation}\label{eq:orderOfSimpleSumAbove}
    \int_N^{N^2} (1 - N/z)^{6\alpha} z^{-1}dz \le \int_N^{N^2} z^{-1}dz = \log N.
\end{equation}
For $N$ large enough,
\begin{equation}\label{eq:orderOfSimpleSumBelow}
     \int_N^{N^2} (1 - N/z)^{6\alpha} z^{-1}dz \ge  \int_{2N}^{N^2} (1 - N/z)^{6\alpha} z^{-1}dz \ge \int_{2N}^{N^2} (1/2)^{6\alpha} z^{-1}dz \simeq \log N.
\end{equation}
From \eqref{eq:orderOfSimpleSum}, \eqref{eq:orderOfSimpleSumAbove}, and \eqref{eq:orderOfSimpleSumBelow}, 
\begin{equation}\label{eq:finalOrderOfSimpleSum}
    \sum_{i=0}^r\sum_{0 \le j \le (N+i^k)^{1/2}}P\left(\mathcal{R}_{i, j}^{(N)} \right) \simeq \log N.
\end{equation}

\subsection{Bounding the covariance term}
The covariance term is as follows.
\begin{align}
    &\sum_{i=0}^r\sum_{0 \le j \le (N+i^k)^{1/2}}\sum_{i'=0}^r\sum_{0 \le j' \le (N+i^k)^{1/2}}P\left(\mathcal{R}_{i, j}^{(N)} \cap \mathcal{R}_{i', j'}^{(N)} \right)1_{\{(i, j) \neq (i', j')\}} \nnend 
    \simeq &\sum_{i=0}^r\sum_{0 \le j \le (N+i^k)^{1/2}}\sum_{i'=i}^r\sum_{0 \le j+j' \le (N+i^k)^{1/2}}P\left(\mathcal{R}_{i, j}^{(N)} \cap \mathcal{R}_{i', j+j'}^{(N)} \right)1_{\{(i', j') \neq (i, 0)\}} .\nnend 
    \intertext{Since $-\left(N+i'^k\right)^{1/2} \le -\left(N+i^k\right)^{1/2} \le -j \le j' \le \left(N+i^k\right)^{1/2} -j \le \left(N+i'^k\right)^{1/2}$, the quadruple sum above is at most}
    \le &\sum_{i=0}^r\sum_{0 \le j \le (N+i^k)^{1/2}}\sum_{i'=i}^r\sum_{|j'| \le (N+i'^k)^{1/2}}P\left(\mathcal{R}_{i, j}^{(N)} \cap \mathcal{R}_{i', j+j'}^{(N)} \right)1_{\{(i', j') \neq (i, 0)\}}. \label{eq:CovTermBound}
\end{align}
Setting $t = N + i^k$, $s = i'^k - i^k$, $x = j$, $y = j'$, we see that $t \ge 1$, $s \ge 0$, $0 \le x \le t^{1/2}$, and $|y| \le (t+s)^{1/2}$. Consider the following cases.
\begin{enumerate}
    \item If $s \le t$, then $x \le t^{1/2} < 2t^{1/2}$ and $|y| \le (t+s)^{1/2} < 2t^{1/2}$;
    \item If $s > t$, then $|y| \le (t+s)^{1/2} < 2s^{1/2}$.
\end{enumerate}
In any case, the conditions in Lemma \ref{lem:boundForTwoPoints} hold. Thus, 
\begin{equation}\label{eq:boundCovarSummand}
    P\left(\mathcal{R}_{i, j}^{(N)} \cap \mathcal{R}_{i', j+j'}^{(N)} \right) \lesssim \dfrac{(N + i'^k)^{-6\alpha} (N+i^k)^{-6\alpha}}{\left((i'^k - i^k)^{1/2} + |j'| \right)^3\left((N+i^k)^{1/2} + j \right)^3}.
\end{equation}
We split the quadruple sum in \eqref{eq:CovTermBound} into two parts: $i' > i$, and $i' = i$. In the first case, $j' = 0$ is included in the summation, whereas it is not in the second case (since $(i', j') \neq (i, 0)$).
For the first case, using \eqref{eq:boundCovarSummand},
\begin{align}
    &\sum_{i=0}^r\sum_{0 \le j \le (N+i^k)^{1/2}}\sum_{i'=i+1}^r\sum_{|j'| \le (N+i'^k)^{1/2}}P\left(\mathcal{R}_{i, j}^{(N)} \cap \mathcal{R}_{i', j+j'}^{(N)} \right) \nnend 
    \lesssim &\sum_{i=0}^r\sum_{0 \le j \le (N+i^k)^{1/2}}\sum_{i'=i+1}^r\sum_{0 \le j' \le (N+i'^k)^{1/2}}\dfrac{(N + i'^k)^{-6\alpha} (N+i^k)^{-6\alpha}}{\left((i'^k - i^k)^{1/2} + |j'| \right)^3\left((N+i^k)^{1/2} + j \right)^3} \nnend 
    \simeq &\int_0^r \int_0^{(N+x^k)^{1/2}} \int_{x+1}^r \int_0^{(N+x'^k)^{1/2}}\dfrac{(N + x'^k)^{-6\alpha} (N+x^k)^{-6\alpha}}{\left((x'^k - x^k)^{1/2} + y' \right)^3\left((N+x^k)^{1/2} + y \right)^3}dy'dx'dydx \nnend 
    \lesssim &\int_0^r \int_0^{(N+x^k)^{1/2}} \int_{x+1}^r \dfrac{(N + x'^k)^{-6\alpha} (N+x^k)^{-6\alpha}}{\left(x'^k - x^k \right)\left((N+x^k)^{1/2} + y \right)^3}dx'dydx \nnend 
    \simeq &\int_0^r \int_{x+1}^r \dfrac{(N + x'^k)^{-(6\alpha+1)} (N+x^k)^{-6\alpha}}{x'^k - x^k}dx'dx \nnend 
    \intertext{Setting $z = N + x^k$, $z' = N + x'^k$, the above is equal to}
    = &\int_N^{N^2} \int_{((z-N)^{1/k} + 1)^k + N +1}^{N^2} k^{-2}\left(1 - \dfrac{N}{z} \right)^{6\alpha}\left(1 - \dfrac{N}{z'} \right)^{6\alpha}\dfrac{1}{z(z'-z)}dz'dz \nnend 
    \lesssim & \int_N^{N^2} \int_{((z-N)^{1/k} + 1)^k + N+1}^{N^2} \dfrac{dz'dz}{z(z'-z)} \nnend 
    = & \int_N^{N^2} z^{-1} \left(\log(N^2 - z) - \log(((z-N)^{1/k} + 1)^k - (z - N) \right)dz \nnend 
    \le & \left(\log(N^2 - N) - \log(((N^2-N)^{1/k} + 1)^k - (N^2 - N) \right)\int_N^{N^2}z^{-1}dz \nnend 
    = & \log  \left(\left(1 + (N^2 - N)^{-(6\alpha+1)} \right)^{1/(6\alpha + 1)} -1\right)^{-1} \log N \nnend
    \intertext{By Lemma \ref{lem:growthInPower}, for some constant $C > 0$, the above is at most}
    \le &\log N^C \log N \lesssim (\log N)^2. \label{eq:boundFirstMinorTerm}
\end{align}
For the second case, using \eqref{eq:boundCovarSummand} again, 
\begin{align}
    &\sum_{i=0}^r\sum_{0 \le j \le (N+i^k)^{1/2}}\sum_{1\le |j'| \le (N+i^k)^{1/2}}P\left(\mathcal{R}_{i, j}^{(N)} \cap \mathcal{R}_{i, j+j'}^{(N)} \right) \nnend 
    \lesssim & \sum_{i=0}^r\sum_{0 \le j \le (N+i^k)^{1/2}}\sum_{1\le |j'| \le (N+i^k)^{1/2}} \dfrac{ (N+i^k)^{-12\alpha}}{j'^3\left((N+i^k)^{1/2} + j \right)^3} \nnend 
    \simeq & \int_0^{r}\int_0^{(N+x^k)^{1/2}}\int_{1}^{(N+x^k)^{1/2}} \dfrac{ (N+x^k)^{-12\alpha}}{y'^3\left((N+x^k)^{1/2} + y \right)^3} \nnend 
    \simeq &\int_0^r (N+x^k)^{-12\alpha - 1}\left(1 - (N+x^k)^{-1} \right)dx \nnend 
    < & \int_0^r (N+x^k)^{-12\alpha - 1}dx \nnend 
    \intertext{Setting $z = N + x^k$, the above is equal to}
    = &\int_N^{N^2}k^{-1}(z-N)^{6\alpha}z^{-12\alpha -1}dz \ll \log N. \label{eq:boundSecondMinorTerm}
\end{align}
From \eqref{eq:CovTermBound}, \eqref{eq:boundFirstMinorTerm}, and \eqref{eq:boundSecondMinorTerm}, 
\begin{equation}\label{eq:FinalCovarianceBound}
    \sum_{i=0}^r\sum_{0 \le j \le (N+i^k)^{1/2}}\sum_{i'=0}^r\sum_{0 \le j' \le (N+i^k)^{1/2}}P\left(\mathcal{R}_{i, j}^{(N)} \cap \mathcal{R}_{i', j'}^{(N)} \right)1_{\{(i, j) \neq (i', j')\}} \lesssim (\log N)^2. 
\end{equation}
Using the bounds in \eqref{eq:finalOrderOfSimpleSum} and \eqref{eq:FinalCovarianceBound} from the Subsections above, applying Lemma \ref{lem:IncluExclu} for $\mathcal{R}(N, \delta) = \bigcup_{i=0}^r \bigcup_{0 \le j \le (N+i^k)^{1/2}} \mathcal{R}_{i, j}^{(N)}$, we get 
\begin{equation*}
    P\left(\mathcal{R}(N, \delta) \right) \gtrsim \dfrac{1}{(\log N)^{-1} + 1} \gtrsim 1,  
\end{equation*}
i.e. there exists a $p_0 > 0$ such that $P\left(\mathcal{R}(N, \delta) \right) \ge p_0$ for sufficiently large $N$. Our proof is complete.
\end{proof}

\section{Open questions}\label{sec:openquest}
The rate of escape of Brownian motion has been well-studied (see \cite{C82}, \cite{E80}, \cite{KS99}, \cite{MP10}, and \cite{S58}). Recall that $f = \Omega(g)$ if there exists $C > 0$ such that $f(x) \ge Cg(x)$ for sufficiently large $x$. A shortcoming of our result is that for $d \ge 7$, we do not take into account increasing functions that are $o(t^{1/4})$ and $\Omega\left(t^{1/4 - \epsilon}\right)$ (for example, when the growth rate $t^{1/4}/\ln t$). In many of the finite-dimensional Brownian motions, an integral test is usually used to determine a necessary and sufficient condition for recurrence. For example, for Brownian motion $(B(t))_{t \ge 0}$ in $\mathbb{R}^d \ (d \ge 3)$, we have the following result:
\begin{theorem} {\bf (Dvoretzky-Erd\H{o}s test)}
Let $(B(t))_{t \ge 0}$ in $\mathbb{R}^d \ (d \ge 3)$ and $f: \mathbb{R}^+ \rightarrow \mathbb{R}^+$ increasing. Then 
\begin{align*}
    \int_{1}^{\infty} \left(f(t)t^{-1/2} \right)^{d-2}t^{-1} dt < \infty \qquad \text{ if and only if } \qquad \liminf_{t \rightarrow \infty}\dfrac{B(t)}{f(t)} = \infty \text{ a.s.}
\end{align*}
Conversely, if the integral diverges, then $\liminf_{t \rightarrow \infty}\dfrac{B(t)}{f(t)} = 0$ a.s. 
\end{theorem}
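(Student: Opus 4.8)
This is the classical Dvoretzky--Erd\H{o}s test, and the proof runs in close parallel to the strategy used above for the random string. The first step is to reduce the statement to a zero--one law. The time inversion $\tilde B(t) := t\,B(1/t)$ is again a standard Brownian motion, and
\begin{equation*}
    \liminf_{t \to \infty} \frac{|B(t)|}{f(t)} \;=\; \liminf_{s \to 0^{+}} \frac{|\tilde B(s)|}{s\,f(1/s)},
\end{equation*}
whose right-hand side is measurable with respect to the germ $\sigma$-field of $\tilde B$ at $0$, hence almost surely equal to a deterministic constant $c \in [0,\infty]$ by Blumenthal's zero--one law (alternatively, one argues as in Lemma \ref{lem:tailevent}). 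Since the integral $\int_1^\infty (f(t)t^{-1/2})^{d-2}t^{-1}\,dt$ is multiplied by $\lambda^{d-2}$ when $f$ is replaced by $\lambda f$, and since the integral either converges or diverges, it is enough to prove: (i) if the integral converges then almost surely $|B(t)| > f(t)$ for all large $t$ (applied to $\lambda f$ this forces $c \ge \lambda$ for every $\lambda > 0$, hence $c = \infty$); and (ii) if the integral diverges then almost surely $|B(t)| \le f(t)$ for arbitrarily large $t$ (applied to $\epsilon f$ this forces $c \le \epsilon$ for every $\epsilon > 0$, hence $c = 0$).

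Fix $a > 1$, put $t_k = a^k$, and set
\begin{equation*}
    A_k = \bigl\{\exists\, t \in [t_{k-1}, t_k] : |B(t)| \le f(t_k)\bigr\}, \qquad A_k' = \bigl\{\exists\, t \in [t_{k-1}, t_k] : |B(t)| \le f(t_{k-1})\bigr\}.
\end{equation*}
Because $f$ is increasing, the occurrence of only finitely many $A_k$ implies $|B(t)| > f(t)$ for all large $t$, while the occurrence of infinitely many $A_k'$ implies $|B(t)| \le f(t)$ for arbitrarily large $t$. The key estimate, obtained from the Markov property at time $t_{k-1}$, the classical hitting formula $P_x(\exists\, t \ge 0 : |B(t)| \le \rho) = (\rho/|x|)^{d-2}$ for transient Brownian motion in $\mathbb{R}^d$ (see \cite{MP10}), and a Gaussian tail bound for $B(t_{k-1})$ — the dominant contribution to the resulting integral coming from $|x| \simeq t_k^{1/2}$ — is
\begin{equation*}
    P(A_k) \;\simeq\; \bigl(f(t_k)\,t_k^{-1/2}\bigr)^{d-2}, \qquad P(A_k') \;\simeq\; \bigl(f(t_{k-1})\,t_{k-1}^{-1/2}\bigr)^{d-2}.
\end{equation*}
Splitting $\int_1^\infty (f(t)t^{-1/2})^{d-2}t^{-1}\,dt$ over the intervals $[a^k, a^{k+1}]$ and using that $f$ is increasing shows that $\sum_k P(A_k)$ and $\sum_k P(A_k')$ each converge if and only if the integral does. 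Thus in case (i), $\sum_k P(A_k) < \infty$ and the Borel--Cantelli lemma finishes the argument.

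For case (ii) we have $\sum_k P(A_k') = \infty$, but the $A_k'$ are not independent, so we use a second-moment argument: applying Lemma \ref{lem:IncluExclu} to $\bigcup_{k \le n} A_k'$ and letting $n \to \infty$ (the Kochen--Stone lemma), it suffices to establish the covariance bound
\begin{equation*}
    \sum_{1 \le j < k \le n} P(A_j' \cap A_k') \;\lesssim\; \Bigl(\sum_{k \le n} P(A_k')\Bigr)^{2} \qquad (n \to \infty),
\end{equation*}
which yields $P(A_k' \text{ i.o.}) > 0$, hence $P(A_k' \text{ i.o.}) = 1$ by the zero--one law; this gives $|B(t)| \le f(t)$ for arbitrarily large $t$ almost surely. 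One estimates $P(A_j' \cap A_k')$ for $j < k$ by conditioning: the path must enter the ball of radius $f(t_j)$ about the origin during $[t_{j-1}, t_j]$ and then, from its position at time $t_j$, enter the ball of radius $f(t_k)$ during $[t_{k-1}, t_k]$; applying the hitting formula twice gives $P(A_j' \cap A_k') \lesssim P(A_j')\,(f(t_k)t_k^{-1/2})^{d-2}$ up to a mild correction, and summing reproduces the square of the first moment. This covariance estimate is the technical heart of the proof and is entirely analogous to the covariance-term computation carried out in Section \ref{sec:ifdis6} for Theorem \ref{thm:SecBigTheorem}; the main obstacle is precisely to show that having visited a small ball around the origin near time $t_j$ does not increase, by more than a summable factor, the probability of a second visit near time $t_k$.
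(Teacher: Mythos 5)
The paper does not prove this theorem; it is quoted in Section~\ref{sec:openquest} as a classical result (with pointers to the literature such as \cite{MP10}) only to motivate a conjectured analogue for the stationary pinned string, so there is no internal proof to compare against. Your sketch follows the standard line of argument and is structurally sound: time inversion to transfer the tail event at $t \to \infty$ to a germ event at $0^{+}$ and invoke Blumenthal's zero--one law; homogeneity of the integral under $f \mapsto \lambda f$ to upgrade the two qualitative statements to the trichotomy $c \in \{0,\infty\}$; dyadic blocks $t_k = a^k$ combined with the hitting formula $(\rho/|x|)^{d-2}$ and a Gaussian average over $B(t_{k-1})$ to get $P(A_k) \simeq \min\bigl(1,(f(t_k)t_k^{-1/2})^{d-2}\bigr)$; direct Borel--Cantelli for the convergence direction; and a Chung--Erd\H{o}s / Kochen--Stone second-moment argument, which is exactly what Lemma~\ref{lem:IncluExclu} provides, for the divergence direction. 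The equivalence $\sum_k (f(t_k)t_k^{-1/2})^{d-2} < \infty \Leftrightarrow \int_1^\infty (f(t)t^{-1/2})^{d-2}t^{-1}\,dt < \infty$ does hold for increasing $f$ by the block-by-block comparison you indicate, so the reduction is sound.

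The one step you state but do not actually justify is the covariance bound $P(A_j' \cap A_k') \lesssim P(A_j')\,(f(t_k)t_k^{-1/2})^{d-2}$. As written, this is false for adjacent blocks: on the event $A_j'$ the path has just visited the ball of radius $f(t_{j-1})$ at some time $\tau \in [t_{j-1},t_j]$, and if $k = j+1$ the gap $t_{k-1} - \tau$ can be arbitrarily small, so the conditional probability of a second near-visit during $[t_{k-1},t_k]$ is $\Theta(1)$ rather than $O\bigl((f(t_k)t_k^{-1/2})^{d-2}\bigr)$. The standard repair must be made explicit: separate the pairs with $k \le j+1$ (there are only $O(n)$ of them, contributing at most a constant times $\sum_{k\le n}P(A_k')$, which is $o\bigl((\sum_{k\le n}P(A_k'))^2\bigr)$ precisely because the first moment diverges) from the pairs with $k \ge j+2$, for which $t_{k-1} - t_j \gtrsim t_{k-1}$ so the increment of $B$ over $[t_j, t_{k-1}]$ is genuinely of order $t_{k-1}^{1/2}$ and dominates the $O(f(t_{j-1}))$ starting displacement, making the hitting-formula computation valid. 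With that split the second-moment argument closes and the proof is complete; the analogy you draw to the covariance computation in Section~\ref{sec:ifdis6} of the paper is apt, as that computation makes essentially the same near-diagonal versus far-apart distinction.
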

In other words, recurrence happens if and only if the integral diverges. Though the stationary pinned string is not Brownian, it is still Gaussian, allowing for possible analogies. From Theorem \ref{thm:FirstBigTheorem} above, we suspect that a similar condition holds for the stationary pinned string $\left(U_t(x)\right)_{t \ge 0, x \in \mathbb{R}}$ when $d \ge 7$.
\begin{conjecture}
For $d \ge 7$, there exists a constant $C = C(d) > 0$ such that the following holds: given $f: \mathbb{R}^+ \rightarrow \mathbb{R}^+$ increasing, then
\begin{align*}
    \int_{1}^{\infty} \left(f(t)t^{-1/4} \right)^{C}t^{-1} dt < \infty \qquad \text{ if and only if } \qquad \liminf_{t \rightarrow \infty}\dfrac{\inf_{x \in \mathbb{R}}|U_t(x)|}{f(t)} = \infty \text{ a.s.}
\end{align*}
Conversely, if the integral diverges, then $\liminf_{t \rightarrow \infty}\dfrac{\inf_{x \in \mathbb{R}}|U_t(x)|}{f(t)} = 0$ a.s. 
\end{conjecture}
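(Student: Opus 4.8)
The plan is to prove the test with $C(d)=d-6$, the value that is forced if a test of this shape holds: it matches Theorem \ref{thm:SecBigTheorem} ($d=6$ gives $C=0$, and $\int_1^\infty t^{-1}\,dt=\infty$ for every $f$) and Theorem \ref{thm:FirstBigTheorem} (for $f(t)=t^\alpha$, $\int_1^\infty(f(t)t^{-1/4})^{d-6}t^{-1}\,dt=\int_1^\infty t^{(\alpha-1/4)(d-6)-1}\,dt$ converges iff $\alpha<1/4$). One may assume $f(t)=o(t^{1/4})$: this is automatic when the integral converges ($f$ increasing), and in the divergence case the complementary regime follows from Theorem \ref{thm:FirstBigTheorem}. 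The two implications would be attacked by the two schemes of Sections \ref{sec:lessthan.25} and \ref{sec:ifdis6}.

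For the recurrence implication (integral diverges $\Rightarrow\liminf=0$ a.s.) the plan is a second-moment argument following Theorem \ref{thm:SecBigTheorem}. Fix $\delta\in(0,1)$; $\mathcal R(\delta)=\{\exists\,t_n\nearrow\infty,\,x_n:\ U_{t_n}(x_n)\in B_{\delta f(t_n)}(0)\}$ is a tail event in $\mathcal G$, so it is enough to get $P(\mathcal R(\delta))>0$. First one would extend Lemma \ref{lem:boundForTwoPoints} to all $d\ge 6$ and to a general increasing $f$, giving (under the analogues of its hypotheses)
\[
 P\big(U_t(x)\in B_{\delta f(t)}(0),\ U_{t+s}(x+y)\in B_{\delta f(t+s)}(0)\big)\ \lesssim\ \frac{f(t)^{d}f(t+s)^{d}}{\big(s^{1/2}+|y|\big)^{d/2}\big(t^{1/2}+|x|\big)^{d/2}}.
\]
Then one would split $(0,\infty)$ into time windows $[\sigma_m,\sigma_{m+1}]$ with $\int_{\sigma_m}^{\sigma_{m+1}}(f(t)t^{-1/4})^{d-6}t^{-1}\,dt\simeq 1$ (infinitely many, by divergence of the full integral), and inside the $m$-th window place a grid of space-time points with time spacing $\simeq f(t)^4$ and spatial spacing $\simeq f(t)^2$. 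By the two-sided bound \eqref{eq:BoundVariance} (as in Proposition \ref{prop:trivialBound}), the first-order sum $\sum P(U_{t_i}(x_j)\in B_{\delta f(t_i)}(0))$ over this grid is $\simeq\int_{\sigma_m}^{\sigma_{m+1}}(f(t)t^{-1/4})^{d-6}t^{-1}\,dt\simeq 1$, and the generalized two-point bound would show the covariance sum is $\lesssim 1$; Lemma \ref{lem:IncluExclu} then gives $P(A_m)\gtrsim 1$, where $A_m$ is the event that $U$ enters its shrinking ball somewhere in the $m$-th window. A Kochen--Stone (generalized second Borel--Cantelli) argument, estimating the cross-window terms $\sum_{m<m'}P(A_m\cap A_{m'})$ again by the two-point bound, would give $P(\limsup_m A_m)\gtrsim 1$; since $\limsup_m A_m\subseteq\mathcal R(\delta)$ lies in $\mathcal G$, the zero--one law forces $P(\mathcal R(\delta))=1$.

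For the transience implication (integral converges $\Rightarrow\liminf=\infty$ a.s.) one would follow Section \ref{sec:lessthan.25}: take the time grid $t_{n+1}=t_n+f(t_n)^4$ (so $\sum_n g(t_n)\simeq\int g(t)f(t)^{-4}\,dt$), and on each strip $[t_n,t_{n+1}]\times\mathbb R$ split space into parabolic boxes $Q_{n,k}$ of spatial side $f(t_n)^2$ centred near $x\simeq kf(t_n)^2$. If one had the sharp hitting estimate
\[
 P\Big(\inf_{(t,x)\in Q_{n,k}}|U_t(x)|<\delta f(t_n)\Big)\ \lesssim\ C(\delta)\,\rho_{n,k}^{-d/2},\qquad \rho_{n,k}:=\frac{t_n^{1/2}}{f(t_n)^2}+|k|,
\]
then summing over $k$ (convergent since $d\ge 3$) and then over $n$ would give $\sum_n\rho_{n,0}^{1-d/2}\simeq\sum_n(f(t_n)t_n^{-1/4})^{d-2}\simeq\int_1^\infty(f(t)t^{-1/4})^{d-6}t^{-1}\,dt<\infty$, so by Borel--Cantelli $\inf_{x\in\mathbb R}|U_t(x)|\ge\delta f(t)$ for all large $t$, a.s.; letting $\delta\downarrow 0$ gives the claim.

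The hard part will be exactly that sharp hitting estimate. Bounding $\inf_{Q_{n,k}}|U|$ by subdividing $Q_{n,k}$ and pairing a single-point estimate with the oscillation tail \eqref{eq:boundExp} --- the mechanism of Section \ref{sec:lessthan.25} --- only yields $P(\inf_{Q_{n,k}}|U|<\delta f(t_n))\lesssim(\log\rho_{n,k})^{d/2}\rho_{n,k}^{-d/2}$, since controlling the oscillation forces inflating the ball radius by a factor $\simeq\sqrt{\log\rho_{n,k}}$. That weakens the conclusion to transience under $\int(\log t)^{d/2}(f(t)t^{-1/4})^{d-6}t^{-1}\,dt<\infty$, which is strictly stronger than the conjectured hypothesis (it already fails for $f(t)=t^{1/4}(\log t)^{-2/(d-6)}$, for which $\int_1^\infty(f(t)t^{-1/4})^{d-6}t^{-1}\,dt$ still converges). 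Eliminating this logarithm requires a genuine small-ball estimate for the $\mathbb R^d$-valued Gaussian field $U$ on a parabolic box that does not go through a supremum tail bound; this seems to be the essential difficulty and may require exploiting the precise covariance structure of $U$ rather than only \eqref{eq:BoundVariance}. The remaining, more routine, obstacles are: the full-generality extension of Lemma \ref{lem:boundForTwoPoints} ($d\ge 6$, arbitrary increasing $f$); the covariance bookkeeping in the recurrence direction, where the identities tailored to $f(t)=t^{-\alpha}$ (e.g.\ Lemma \ref{lem:growthInPower}) must be replaced by estimates for general $f$; and a mild doubling assumption on $f$ (so that $f$ is essentially constant over each $[t_n,t_{n+1}]$), expected to be removable by monotone approximation.
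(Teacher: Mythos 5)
This statement appears in the paper as a \emph{conjecture}, not a theorem, so there is no paper proof to compare yours against. That said, your outline is the natural one: $C(d)=d-6$ is the only constant consistent with both Theorem \ref{thm:FirstBigTheorem} (power-law $f$) and Theorem \ref{thm:SecBigTheorem} ($d=6$), and the two directions would indeed be attacked by the second-moment machinery of Section \ref{sec:ifdis6} (upgraded with a Kochen--Stone argument across time windows) and the grid plus Borel--Cantelli machinery of Section \ref{sec:lessthan.25}.

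You have also correctly located the obstruction. The convergence direction needs a small-ball estimate for the field on a parabolic box of the form $P(\inf_{Q_{n,k}}|U|<\delta f(t_n))\lesssim\rho_{n,k}^{-d/2}$ with no polylogarithmic loss. The paper's mechanism --- Proposition \ref{prop:trivialBound} at grid points plus the sup-tail oscillation bound \eqref{eq:boundExp} on sub-boxes --- inflates the radius by a factor of order $\sqrt{\log\rho_{n,k}}$, which is precisely why Theorem \ref{thm:FirstBigTheorem} resolves only $f(t)=t^\alpha$ and leaves slowly varying corrections open; removing that loss would require using the Gaussian structure of $U$ more finely than \eqref{eq:BoundVariance} and \eqref{eq:boundExp} allow. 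The divergence direction is also incomplete as written: Lemma \ref{lem:boundForTwoPoints} is proved only for $d=6$, and Lemma \ref{lem:growthInPower} together with the covariance bookkeeping of Section \ref{sec:ifdis6} are tailored to $f(t)=t^{-\alpha}$, so a general-$f$ version and the cross-window second-moment estimates would have to be supplied. As you say yourself, this is a program rather than a proof --- consistent with the paper leaving the statement open.
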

The situation is even less well-understood in the critical dimension $d = 6$, where we only managed to bound the shrinking rate of $f(t)$ on one side. Interestingly, the difficulties for $d = 6$ are encountered not only when we study the question of recurrence, but also in hitting problems. Recall that a $\mathbb{R}^d$-valued process $(u_t(\cdot))_{t \ge 0}$ is said to hit the point $z \in \mathbb{R}^d$ if 
\begin{equation*}
    P(u_t(x) = z \text{ for some } t > 0, x \in \mathbb{R}) > 0.
\end{equation*}
We also say that $d_c$ is the critical dimension if hitting of $B = \{z\}$ occurs for $d < d_c$ but not for $d > d_c$. For the nonlinear stochastic heat equation  
\begin{equation*}\label{eq:Funaki}
    \dfrac{\partial u_t(x)}{\partial t} = \dfrac{\partial^2 u_t(x)}{\partial x^2} + \sigma(u_t(x))\dot{W}(t, x),
\end{equation*}
where the white noise in \eqref{eq:FunakiModel} is multiplied by a matrix-valued function with certain restrictions (see \cite{DKE09}, \cite{DMX21}), the critical dimension is known to be $d_c = 6$. Unlike in the case of vector-valued Brownian sheet and other classes of Gaussian fields, where the sets of hitting points are relatively well-understood (see \cite{KS99}, \cite{DMX17}), it is only known that for the nonlinear stochastic heat equation, almost every point in $\mathbb{R}^6$ is not hit. 

Inspired by the case of $N$-parameter $d$-dimensional Brownian sheet (see \cite{KS99}), we suspect that an exponential shrinking rate might suffice for recurrence of the stationary pinned string. However, the tools from potential theory, which was developed in the mentioned paper, is intractable in solving this problem. Again, the ultimate goal is to find an integral test to determine the necessary and sufficient condition for recurrence.

\section{Acknowledgements}
The author would like to thank Carl Mueller for his mentorship and invaluable suggestions.

\appendix 

\section{Proof of Lemma \ref{lem:boundForTwoPoints}}\label{sec:proofboundForTwoPoints}
We start with the following lemma.
\begin{lemma}\label{lem:boundForSmallTime}
When $d = 6$, for all $s, t \in [1, 2]$, $|x|, |y| \le 2$, and $\delta_1, \delta_2 \in (0, 1)$,
\begin{equation}\label{eq:boundForSmallTime}
    P\left(U_t(x) \in B_{\delta_1}(0), U_s(y) \in B_{\delta_2}(0)\right) \lesssim \delta_1^{6}\delta_2^{6}\left(|t-s|^{1/2} + |x-y| \right)^{-3}. 
\end{equation}
\end{lemma}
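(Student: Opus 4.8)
The plan is to exploit the joint Gaussianity of $(U_t(x),U_s(y))$. Its six coordinates are i.i.d., so the probability in question equals $\big(P(|U_t^{(1)}(x)|<\delta_1,\,|U_s^{(1)}(y)|<\delta_2)\big)^6$; and for the centered Gaussian pair $(U_t^{(1)}(x),U_s^{(1)}(y))$, with $2\times2$ covariance matrix $\Sigma$, the probability of landing in a box is at most the box volume times the value of the density at the mean, which gives
\[
P\big(U_t(x)\in B_{\delta_1}(0),\,U_s(y)\in B_{\delta_2}(0)\big)\ \le\ \Big(\frac{4\delta_1\delta_2}{2\pi\sqrt{\det\Sigma}}\Big)^{6}.
\]
Thus the lemma reduces to the uniform estimate $\det\Sigma\gtrsim |t-s|^{1/2}+|x-y|$ over $s,t\in[1,2]$, $|x|,|y|\le 2$. (The bound is vacuous when $(t,x)=(s,y)$, so this quantity may be assumed positive; by the symmetry of both sides under $(t,x,\delta_1)\leftrightarrow(s,y,\delta_2)$ I may also assume $s\le t$.) Factoring $\det\Sigma=\operatorname{Var}(U_s^{(1)}(y))\cdot\operatorname{Var}\big(U_t^{(1)}(x)\mid U_s^{(1)}(y)\big)$ and using $U_0^{(1)}(0)=0$, inequality \eqref{eq:BoundVariance} with base point $(0,0)$ gives $\operatorname{Var}(U_t^{(1)}(x))\simeq |x|+t^{1/2}\simeq 1$ and likewise $\operatorname{Var}(U_s^{(1)}(y))\simeq 1$ on this range; hence it suffices to prove the two-point local-nondeterminism bound $\operatorname{Var}\big(U_t^{(1)}(x)\mid U_s^{(1)}(y)\big)\gtrsim |t-s|^{1/2}+|x-y|$, which I would obtain from the white-noise (mild solution) representation.

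\textbf{Temporal contribution.} Split $U_t^{(1)}(x)=A+B$, where $A$ is a functional of $\tilde W^{(1)}$ and of $W^{(1)}$ on $[0,s]\times\mathbb R$, and $B=\int_{(s,t]\times\mathbb R}G_{t-r}(x-z)\,W^{(1)}(dz\,dr)$ depends only on $W^{(1)}$ on $(s,t]\times\mathbb R$. Then $B$ is independent of $A$ and of $U_s^{(1)}(y)$, and $\operatorname{Var}(B)=\int_0^{t-s}\|G_u\|_{L^2(\mathbb R)}^2\,du\simeq (t-s)^{1/2}$. Since the conditional variance equals $\inf_a\|A+B-aU_s^{(1)}(y)\|_{L^2}^2$ and $B$ is orthogonal to both $A$ and $U_s^{(1)}(y)$, one gets $\operatorname{Var}(U_t^{(1)}(x)\mid U_s^{(1)}(y))\ge\operatorname{Var}(B)\gtrsim (t-s)^{1/2}$.

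\textbf{Spatial contribution.} Let $\varepsilon=c\,|x-y|^2$ for a small absolute constant $c$ (so $\varepsilon\le t$), and let $\tilde B$ be the stochastic integral of $W^{(1)}$ against $G_{t-r}(x-z)$ over the region $D=(t-\varepsilon,t]\times\{z:|z-x|<|x-y|/4\}$. Because $G_u(x-\cdot)$ with $u\le\varepsilon$ puts all but a negligible fraction of its mass in $\{|z-x|<|x-y|/4\}$, a direct heat-kernel computation gives $\operatorname{Var}(\tilde B)\gtrsim |x-y|$; meanwhile the dependence of $U_s^{(1)}(y)$ on the noise in $D$ is carried by the factor $G_{s-r}(y-z)$ with $|y-z|\ge 3|x-y|/4$ and $0\le s-r<\varepsilon$, which is exponentially small, so $\operatorname{Cov}(\tilde B,U_s^{(1)}(y))=O\big(e^{-c''/c}|x-y|\big)$ for an absolute $c''$. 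Writing $U_t^{(1)}(x)=\tilde A+\tilde B$ with $\tilde A\perp\tilde B$, the single-variable formula $\operatorname{Var}(X\mid Y)=\operatorname{Var}(X)-\operatorname{Cov}(X,Y)^2/\operatorname{Var}(Y)$, together with Cauchy--Schwarz applied to $\operatorname{Cov}(\tilde A,U_s^{(1)}(y))$, yields $\operatorname{Var}(U_t^{(1)}(x)\mid U_s^{(1)}(y))\ge \operatorname{Var}(\tilde B)-O\big(e^{-c''/c}|x-y|\big)\gtrsim |x-y|$ once $c$ is chosen small enough (depending only on $c_1$ from \eqref{eq:BoundVariance}).

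Combining the two contributions gives $\operatorname{Var}(U_t^{(1)}(x)\mid U_s^{(1)}(y))\gtrsim\max\{(t-s)^{1/2},|x-y|\}\simeq |t-s|^{1/2}+|x-y|$, hence $\det\Sigma\gtrsim |t-s|^{1/2}+|x-y|$, and the first display then gives exactly the claimed bound. \textbf{The main obstacle} is this conditional-variance lower bound — specifically its spatial half: one must show that conditioning on a single value $U_s^{(1)}(y)$, which in the canonical metric may sit far from $(t,x)$, destroys at most an exponentially small fraction of the white-noise fluctuations of $U_t^{(1)}(x)$ localized near $(t,x)$. Everything else is a bounded-parameter Gaussian estimate. (One could instead quote a local-nondeterminism property of the pinned string from \cite{MT02} and dispense with the white-noise decomposition.)
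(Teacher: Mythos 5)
Your proof is correct, but it takes a genuinely different route from the paper's for the central local-nondeterminism estimate $\operatorname{Var}(U_t^{(1)}(x)\mid U_s^{(1)}(y))\gtrsim |t-s|^{1/2}+|x-y|$. The paper instead uses the algebraic identity
\[
\operatorname{Var}\big(X-E(X\mid Y)\big)=\frac{\bigl(\rho_{X,Y}^2-(\sigma_X-\sigma_Y)^2\bigr)\bigl((\sigma_X+\sigma_Y)^2-\rho_{X,Y}^2\bigr)}{4\sigma_X^2},
\]
bounds $\rho_{X,Y}^2$ below by \eqref{eq:BoundVariance}, bounds $|\sigma_X-\sigma_Y|$ above by $|t-s|+|x-y|$ via the Mean Value Theorem applied to $\tilde F(t,x)=(t^{1/2}F(|x|t^{-1/2}))^{1/2}$, obtains the desired lower bound in a small-separation regime $|t-s|+|x-y|\le\epsilon$, and then removes the restriction by a continuity-on-a-compact-set argument. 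You instead use the mild-solution representation to split off a stochastic-integral piece ($B$ over $(s,t]\times\mathbb R$ for the temporal part; $\tilde B$ over $(t-\varepsilon,t]\times\{|z-x|<|x-y|/4\}$ with $\varepsilon=c|x-y|^2$ for the spatial part) that is independent (resp. exponentially close to independent) of $U_s^{(1)}(y)$ and carries variance $\gtrsim |t-s|^{1/2}$ (resp. $\gtrsim \sqrt c\,|x-y|$). The paper's approach is shorter given that the covariance formulas \eqref{eq:ExpressVariance}--\eqref{eq:BoundVariance} are already on the table, but its compactness step is somewhat soft; your approach is more work but is structurally cleaner, makes the local-nondeterminism mechanism explicit, and would generalize more readily (e.g.\ it does not rely on the specific form of $F$). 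The one place you should fill in is the spatial covariance estimate: one should check, for the overlap time window $(t-\varepsilon,s]$, that $\int_0^{\varepsilon}(4\pi v)^{-1/2}e^{-(3|x-y|/4)^2/(4v)}\,dv \lesssim \sqrt{c}\,|x-y|\,e^{-c''/c}$, so that the error term is indeed a vanishing fraction of $\operatorname{Var}(\tilde B)\simeq\sqrt c\,|x-y|$ as $c\to0$; the substitution $v=c|x-y|^2 w$ makes this routine.
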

\begin{proof}
We would like to show that for $1 \le i \le 6$,
\begin{equation}\label{eq:boundComponent}
    P\left(U_t^{(i)}(x) \in B_{\delta_1}(0), U_s^{(i)}(y) \in B_{\delta_2}(0)\right) \lesssim \delta_1\delta_2\left(|t-s|^{1/2} + |x-y| \right)^{-1/2}.
\end{equation}
Then \eqref{eq:boundForSmallTime} follows from the independence of coordinates $U^{(i)}$. 

Consider the centered Gaussian vector $(X, Y) = \left(U_t^{(i)}(x), U_s^{(i)}(y) \right)$. Without loss of generality, suppose $t \ge s$. Since $X$ is centered Gaussian, $P\left(\mu + X \in B_{\delta_1}(0)\right)$ is maximized when $\mu = 0$. Thus, 
\begin{align}
    P\left(X \in B_{\delta_1}(0) \mid Y \right) &\lesssim \delta_1 \text{Var}\left(X - E(X \mid Y) \right)^{-1/2}. \nnend
    \intertext{Hence,}
    P\left(X \in B_{\delta_1}(0), Y \in B_{\delta_2}(0) \right) &\lesssim \delta_1\delta_2 \text{Var}\left(X - E(X \mid Y) \right)^{-1/2}. \label{eq:boundTheProb}
\end{align}
Setting $\sigma_X^2 = E(X^2), \sigma_Y^2 = E(Y^2)$, and $\rho_{X, Y}^2 = E((X-Y)^2)$, then
\begin{equation}\label{eq:ExpressCondVariance}
    \text{Var}\left(X - E(X \mid Y) \right) = \dfrac{\left(\rho_{X, Y}^2 - (\sigma_X - \sigma_Y)^2 \right) \left( (\sigma_X + \sigma_Y)^2 - \rho_{X, Y}^2 \right)}{4\sigma_X^2}.
\end{equation}
From \eqref{eq:ExpressVariance},
\begin{equation*}
     \sigma_X^2 = t^{1/2} F\left(|x|t^{-1/2}\right) \ge t^{1/2} (2\pi)^{-1/2} \ge (2\pi)^{-1/2},
\end{equation*}
and 
\begin{equation*}
    \sigma_X^2 = t^{1/2} F\left(|x|t^{-1/2}\right) \lesssim t^{1/2}|x|t^{-1/2} = |x| \le 2.
\end{equation*}
we see that $\sigma_X^2$ is bounded and bounded away from $0$. From \eqref{eq:BoundVariance}, $\rho_{X, Y}^2 \ge c_1 (|t-s|^{1/2} + |x-y|)$. Setting $\tilde{F}: \mathbb{R}^2 \rightarrow \mathbb{R}$ such that $\tilde{F}(t, x) = \left(t^{1/2}F(|x|t^{-1/2})\right)^{1/2}$, then since $F$ is differentiable, using the Mean Value Theorem, we get that $|\sigma_X - \sigma_Y| = |\tilde{F}(t, x) - \tilde{F}(s, y)| \lesssim |t-s| + |x-y|$. All these, together with \eqref{eq:ExpressCondVariance}, shows that there exists $C, \epsilon > 0$ such that 
\begin{equation}\label{eq:smallDev}
    \text{Var}\left(X - E(X \mid Y) \right) \ge C\left(|t-s|^{1/2} + |x-y|\right)
\end{equation}
for $t, s \in [1, 2]$, $|x|, |y| \le 2$, and $|t-s| + |x-y| \le \epsilon$.

Note that $\text{Var}\left(X - E(X \mid Y) \right)\left(|t-s|^{1/2} + |x-y|\right)^{-1}$ is a continuous function of $s, t, x, y$ in their specified region when $|t-s| + |x-y| \ge \epsilon$. We can then remove the constraint $|t-s| + |x-y| < \epsilon$ and modify the constant $C$ such that the bound in \eqref{eq:smallDev} holds. Combining this with \eqref{eq:boundTheProb} gives us \eqref{eq:boundComponent}, finishing our proof. 
\end{proof}
\begin{proof}[Proof of Lemma \ref{lem:boundForTwoPoints}]
Suppose the first condition in the lemma holds. Then by scaling,
\begin{align*}
    P&\left(U_t(x) \in B_{\delta t^{-\alpha}}(0), U_{t+s}(x+y) \in B_{\delta (t+s)^{-\alpha}}(0) \right) \nnend 
    = P&\left(U_1(x/t^{1/2}) \in B_{\delta t^{-\alpha-1/4}}(0), U_{1+s/t}((x+y)/t^{1/2}) \in B_{\delta (t+s)^{-\alpha}t^{-1/4}}(0) \right) \nnend 
    \intertext{By Lemma \ref{lem:boundForSmallTime}, the above is bounded by a constant multiple of}
    & \lesssim \left(\delta t^{-\alpha-1/4}\delta (t+s)^{-\alpha}t^{-1/4}\right)^6\left((s/t)^{1/2} + |y|/t^{1/2} \right)^{-3} \nnend 
    & = \delta^{12}\dfrac{t^{-6\alpha}(t+s)^{-6\alpha}}{\left(s^{1/2} + |y| \right)^3t^{3/2}} \nnend 
    \intertext{Since $|x| \le 2t^{1/2}$, the above is bounded by a constant multiple of}
    & \lesssim \delta^{12}\dfrac{t^{-6\alpha}(t+s)^{-6\alpha}}{\left(s^{1/2} + |y| \right)^3\left(t^{1/2} + |x| \right)^3},  
\end{align*}
concluding our proof when the first condition in the lemma holds.
Suppose now that the second condition in the lemma holds. From Section \ref{sec:intro}, 
\begin{equation*}
    U_{t+s}(x+y) = \int G_s(x+y-z)U_t(z)dz + \int_0^s \int G_{s-r}(x+y-z)W(dz dr).
\end{equation*}
Thus, 
\begin{align*}
    \text{Var} &\left(U_{t+s}(x+y) - E(U_{t+s}(x+y)|\mathcal{F}_t \right) \nnend 
    &= \text{Var}\left( \int_0^s \int G_{s-r}(x+y-z)W(dz dr) \right) \nnend 
    &= Cs^{1/2}.
\end{align*}
Hence, for $|y| \le 2s^{1/2}$,
\begin{equation*}
    P\left(U_{t+s}(x+y) \in B_{\delta (t+s)^{-\alpha}}(0) \mid \mathcal{F}_t \right) \lesssim \delta^6 (t+s)^{-6\alpha} s^{-3/2} \lesssim \delta^6(t+s)^{-6\alpha} \left(s^{1/2} + |y| \right)^{-3}.
\end{equation*}
Then 
\begin{equation*}
    P\left(U_t(x) \in B_{\delta t^{-\alpha}}(0), U_{t+s}(x+y) \in B_{\delta (t+s)^{-\alpha}}(0) \right) \lesssim \delta^{12}\dfrac{(t+s)^{-6\alpha} t^{-6\alpha}}{\left(s^{1/2} + |y| \right)^3\left(t^{1/2} + |x| \right)^3},
\end{equation*}
concluding our proof.
\end{proof}

\end{document}